\title[Fractional Primitive Equations]{Global well-posedness of the 2D primitive equations with fractional horizontal dissipation}
\author[Changhui Tan]{Changhui Tan}
\address[Changhui Tan]{\newline Department of Mathematics, University of South Carolina, 1523 Greene St, Columbia SC 29208, USA}
\email{tan@math.sc.edu}
\author[Zhuan Ye]{Zhuan Ye}
\address[Zhuan Ye]{\newline Department of Mathematics and Statistics, Jiangsu Normal University, 101 Shanghai Road, Xuzhou 221116, Jiangsu, PR China}
\email{yezhuan815@126.com}
\thanks{\textit{Acknowledgment.} 
CT is partially supported by NSF grant DMS-2238219.
}
\subjclass[2020]{35B65, 35Q35, 35Q86, 76D03.}
\keywords{Primitive equations, Fractional dissipation, Global regularity}
\newtheorem{theorem}{Theorem}[section]
\newtheorem{lemma}[theorem]{Lemma}
\theoremstyle{definition}
\theoremstyle{remark}
\newtheorem{remark}{Remark}
\def\T{\mathbb{T}}
\def\dd{\mathrm{d}}
\def\dxdz{\dd x\dd z}
\begin{document}
%%%%%%%%%%%%%%%%
\allowdisplaybreaks

\begin{abstract}
In this paper, we investigate the two-dimensional incompressible primitive equations with fractional horizontal dissipation. Specifically, we establish global well-posedness of strong solutions for arbitrarily large initial data when the dissipation exponent satisfies $\alpha\geq\alpha_{0}\approx1.1108$. In addition, we prove global well-posedness of strong solutions for small initial data when $\alpha \in [1, \alpha_0)$. Notably, the smallness assumption is imposed only on the $L^\infty$ norm of the initial vorticity.
\end{abstract}

\maketitle %\centerline{\date}

% \tableofcontents

\section{Introduction}\label{sec:intro}
The primitive equations constitute one of the fundamental models in planetary oceanic and atmospheric dynamics; see, for instance, the books \cite{majda2003introduction, vallis2017atmospheric}. These equations are derived from the (rotating) incompressible Navier-Stokes equations under the hydrostatic balance assumption for the vertical pressure component. In geophysical fluid dynamics, the horizontal turbulent mixing is typically much stronger than the vertical one, so that the horizontal viscosity dominates while the vertical viscosity is comparatively weak and is often neglected.

In this paper, we consider the two-dimensional primitive equations with fractional horizontal dissipation:
\begin{equation}\label{eq:main}
\left\{\begin{array}{l}
\partial_t u + u\partial_{x}u+\widetilde{w}\partial_{z}u + \partial_{x}p+\nu\Lambda_{h}^{\alpha}u= 0,\\[3pt]
\partial_{x}u+ \partial_{z}\widetilde{w}=0, \\[3pt]
\partial_{z}p=0,
\end{array}\right.
\end{equation}
defined on a 2D periodic channel
\[
 \Omega \triangleq \big\{(x,z):\ x\in \mathbb{T},\,z\in [0,1]\big\},
\]
where $(x,z)$ denote the horizontal and vertical variables, $(u,\widetilde{w})$ represent the horizontal and vertical velocity components, respectively, and $p$ is the pressure. 
Here $\mathbb{T}$ denotes the 1D periodic domain of length one, and $\Lambda_{h}^{\alpha}=(-\partial_{x}^{2})^{\frac{\alpha}{2}}$ denotes the horizontal fractional Laplacian, defined by
\[
\Lambda_{h}^{\alpha}u(x,z) \triangleq c_\alpha\,{\rm p.v.}\int_{\mathbb R} \frac{u(x,z)-u(y,z)}{|x-y|^{1+\alpha}}\,\dd y,\quad c_\alpha = \tfrac{2^\alpha\Gamma(\frac{\alpha+1}{2})}{\sqrt\pi\,|\Gamma(-\frac\alpha2)|}, \quad \alpha\in(0,2),
\]
where $u$ is viewed as a $1$-periodic function in the $x$-variable and ${\rm p.v.}$ denotes the principal value. The fractional Laplacian provides a continuous interpolation between the inviscid system (as $\alpha \to 0$) and the viscous system (as $\alpha \to 2$).
The equation \eqref{eq:main}$_2$ enforces incompressibility, while \eqref{eq:main}$_3$ expresses the hydrostatic pressure balance.

The system \eqref{eq:main} can be regarded as the hydrostatic limit of the Navier-Stokes equations with fractional horizontal dissipation, and the derivation follows analogously to \cite{azerad2001mathematical,li2019primitive}. 

We further impose the initial condition 
\[
u(x,z,0) = u_0(x,z),
\]
and the boundary condition
\[
\widetilde{w}(x,0,t)=\widetilde{w}(x,1,t)=0.
\]
Together with \eqref{eq:main}$_2$,  this implies that the vertical velocity $\widetilde{w}$ is uniquely determined by $u$ through
\[
\widetilde{w}(x,z,t)=-\int_{0}^{z}\partial_{x}u(x,\tilde{z},t)\,\dd\tilde{z}.
\]
This introduces a loss of one horizontal derivative in $\widetilde{w}$ compared to the Navier-Stokes equations, a distinctive characteristic of the primitive equations.

\vskip 1em
The first systematic mathematical studies of the primitive equations were carried out in the 1990s by Lions, Temam, and Wang \cite{lions1992equations,lions1992new}, who established the global existence of weak solutions. However, the uniqueness of weak solutions remains an open problem, even in the two-dimensional setting; see \cite{bresch2003uniqueness,kukavica2014primitive,li2017existence,medjo2010uniqueness} for several partial results in this direction.

The global existence and uniqueness of strong solutions to the three-dimensional primitive equations with full viscosity have been known since the breakthrough work of Cao and Titi \cite{cao2007global}, which exploits the anisotropic structure of the system. See also \cite{hieber2016global,kobelkov2006existence,kukavica2007regularity} for further generalizations. The well-posedness theory was later extended to the primitive equations with only horizontal viscosity in a series of works \cite{cao2016global,cao2017strong,cao2020global}.

On the other hand, for the inviscid primitive equations, the loss of a horizontal derivative in the vertical velocity renders the system ill-posed in Sobolev spaces, even locally in time; see \cite{han2016ill,renardy2009ill}. Local well-posedness can be recovered under additional structural assumptions, such as the local Rayleigh condition \cite{brenier1999homogeneous,kukavica2014local,masmoudi2012h}, or by assuming real-analytic initial data \cite{kukavica2011local}. Moreover, smooth solutions may develop singularities in finite time; see \cite{cao2015finite,collot2023stable,wong2015blowup}.

These contrasting behaviors highlight that horizontal viscosity plays a crucial role in stabilizing the flow. To understand the transition between the viscous and inviscid regimes, Abdo, Lin, and Tan \cite{abdo2025well} introduced a family of primitive equations with fractional horizontal dissipation \eqref{eq:main}, which interpolate between the two systems. They identified a sharp transition between local well-posedness and ill-posedness at the critical dissipation exponent $\alpha = 1$. Specifically, the system \eqref{eq:main} is ill-posed in Sobolev spaces when $\alpha < 1$, analogous to the inviscid case, while it is locally well-posed when $\alpha > 1$, analogous to the viscous case. In the critical regime $\alpha = 1$, the well-posedness of solutions depends on the interplay between the horizontal viscosity coefficient $\nu$ and the size of the initial data.

Furthermore, they established a Beale-Kato-Majda type regularity criterion
\begin{equation}\label{eq:BKM}	
\int_0^T\|\Lambda_{h}^{\frac{3-\alpha}{2}}\omega(t)\|_{L^2}^2\dd t<\infty,
\end{equation}
under which a local solution can be extended up to time $T$, where $\omega \triangleq \partial_z u$ denotes the hydrostatic vorticity. The criterion \eqref{eq:BKM} was verified for $\alpha\geq1$ under a smallness assumption on the initial data, leading to a global well-posedness theory for small initial data. For general initial data, global well-posedness was established for $\alpha \geq \tfrac{6}{5}$. They further posed the following intriguing open problem:
\begin{quote}
\emph{The global well-posedness problem for the system \eqref{eq:main} with general initial data in the range $\alpha \in (1, \tfrac{6}{5})$ remains open.}
\end{quote}

\vskip 1em
In this paper, we address and partially resolve this problem. In particular, we establish the global well-posedness of \eqref{eq:main} for $\alpha \geq \alpha_0$, where $\alpha_0 \approx 1.1108 < \tfrac{6}{5}$. The precise statement of our main theorem is as follows.

\begin{theorem}\label{thm:GWP}
Let $T>0$ and $\alpha\in[\alpha_{0},\,2]$, where $1.1108\approx\alpha_{0}\in (1,\frac{6}{5})$ is the root of the cubic equation $2\alpha^3+3\alpha^2-4\alpha-2=0.$

Assume that the initial data $u_0$ satisfies
\begin{align}\label{eq:init}
& u_0 \in \mathsf{H} \triangleq \Big\{ f\in L^{2}(\Omega): \int_{0}^{1}f(x,z)\,\dd z=0,\,\,\forall\,x\in \mathbb{T}\Big\}, \quad  \Lambda_{h}^{\delta_m}u_0 \in L^{2}(\Omega),\nonumber\\
& \omega_{0}\triangleq\partial_{z}u_0 \in L^{\infty}(\Omega),\quad \Lambda_{h}^{\frac{3-2\alpha}{2}}\omega_{0} \in L^{2}(\Omega),\quad \text{and}\quad \partial_{z}\omega_{0} \in L^{2}(\Omega),
\end{align}
where
\begin{equation}\label{eq:deltam}
	\delta_m \triangleq \max \Big\{\frac{\alpha}{2},\,\frac{2(-\alpha^2-\alpha+3)}{3-\alpha}\Big\}.
\end{equation}
Then, the system \eqref{eq:main} with initial data $u_0$ has a unique global strong solution on $[0,T]$, with
\begin{align}\label{eq:sol}
& u\in C\big([0,T], \mathsf{H}\big),\quad \Lambda_{h}^{\delta_m}u \in L^\infty\big([0,T], L^{2}(\Omega)\big),\quad  \Lambda_{h}^{\delta_m + \frac\alpha2}u \in L^2\big([0,T], L^2(\Omega)\big),\nonumber\\
& \omega\triangleq\partial_{z}u \in L^\infty\big([0,T], L^\infty(\Omega)\big),\quad \Lambda_{h}^{\frac{3-2\alpha}{2}}\omega \in L^\infty\big([0,T], L^{2}(\Omega)\big),\quad  \Lambda_{h}^{\frac{3-\alpha}{2}}\omega \in L^2\big([0,T], L^2(\Omega)\big),\nonumber\\
& \partial_{z}\omega \in L^\infty\big([0,T], L^{2}(\Omega)\big),\quad \text{and}\quad \Lambda_{h}^{\frac{\alpha}{2}}\partial_{z}\omega \in L^2\big([0,T], L^2(\Omega)\big).
\end{align}
\end{theorem}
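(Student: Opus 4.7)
\medskip
\noindent\textbf{Plan of proof.} Local well-posedness of strong solutions for $\alpha>1$ together with the Beale--Kato--Majda type continuation criterion \eqref{eq:BKM} is already available from \cite{abdo2025well}. Consequently, to prove Theorem~\ref{thm:GWP} it suffices, on any fixed $[0,T]$ and for $\alpha\in[\alpha_{0},2]$, to produce global a priori bounds matching the regularity class \eqref{eq:sol}; an a priori bound on $\int_{0}^{T}\|\Lambda_{h}^{(3-\alpha)/2}\omega\|_{L^{2}}^{2}\,\dd t$ alone activates \eqref{eq:BKM} and yields the global extension, while the remaining norms in \eqref{eq:sol} are recovered simultaneously as byproducts of the same chain of estimates.

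\medskip
\noindent\textbf{Low-order bounds.} I would first record the standard $L^{2}$ energy identity for $u$ and then differentiate \eqref{eq:main}$_{1}$ in $z$. Using $\partial_{z}p=0$, incompressibility, and $\omega=\partial_{z}u$, the hydrostatic vorticity satisfies
\[
\partial_{t}\omega+u\,\partial_{x}\omega+\widetilde{w}\,\partial_{z}\omega+\nu\Lambda_{h}^{\alpha}\omega=0,
\]
with $\widetilde{w}=0$ at $z=0,1$. A C\'ordoba--C\'ordoba-type pointwise argument for fractional drift--diffusion produces $\|\omega(t)\|_{L^{\infty}}\leq\|\omega_{0}\|_{L^{\infty}}$: at an interior maximum the drift terms vanish and the non-local dissipation is non-negative, while on a vertical boundary $\widetilde{w}=0$ and $\partial_{x}\omega=0$. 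Testing the vorticity equation against $\omega$ then delivers the companion $L^{2}$ identity together with the basic $L^{2}_{t}L^{2}(\Omega)$ control on $\Lambda_{h}^{\alpha/2}\omega$.

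\medskip
\noindent\textbf{Higher-order bounds and the main obstacle.} The core of the argument is a joint differential inequality for
\[
Y(t)\triangleq\|\Lambda_{h}^{(3-2\alpha)/2}\omega\|_{L^{2}}^{2}+\|\partial_{z}\omega\|_{L^{2}}^{2}+\|\Lambda_{h}^{\delta_{m}}u\|_{L^{2}}^{2},\ D(t)\triangleq\|\Lambda_{h}^{(3-\alpha)/2}\omega\|_{L^{2}}^{2}+\|\Lambda_{h}^{\alpha/2}\partial_{z}\omega\|_{L^{2}}^{2}+\|\Lambda_{h}^{\delta_{m}+\alpha/2}u\|_{L^{2}}^{2},
\]
derived by applying $\Lambda_{h}^{(3-2\alpha)/2}$, $\partial_{z}$, and $\Lambda_{h}^{\delta_{m}}$ to the $\omega$- and $u$-equations and testing against the respective quantities. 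The divergence-free structure $\partial_{x}u+\partial_{z}\widetilde{w}=0$ cancels the top-order transport contribution, leaving commutator errors that I would dominate by $\varepsilon\,D(t)+C\|\omega\|_{L^{\infty}}^{a}(1+Y(t))^{b}$ using Kato--Ponce commutator estimates, one-dimensional Sobolev embedding in the horizontal variable, and Minkowski-type integration in the vertical variable. The main obstacle is the vertical advection $\widetilde{w}\,\partial_{z}\omega$: because $\widetilde{w}(x,z)=-\int_{0}^{z}\partial_{x}u(x,\tilde z)\,\dd\tilde z$ loses one horizontal derivative relative to $u$, a naive power count would require $\alpha>\tfrac{3}{2}$. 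My plan is to integrate by parts in $z$ to transfer a vertical derivative onto $\partial_{z}\omega$, exploit the pointwise bound $\|\omega\|_{L^{\infty}}\leq\|\omega_{0}\|_{L^{\infty}}$ to save one factor in the resulting product, and then balance the remaining horizontal Riesz powers against the dissipation. Careful tracking of these exponents produces precisely the two cases in the definition of $\delta_{m}$ in \eqref{eq:deltam}, and absorption into $D(t)$ via Young's inequality reduces the compatibility requirement to the scalar condition $2\alpha^{3}+3\alpha^{2}-4\alpha-2\geq 0$, first met at $\alpha=\alpha_{0}$; this sharper treatment of the vertical-advection commutator is exactly what lowers the threshold from the previous $\tfrac{6}{5}$ of \cite{abdo2025well}. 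Once this step closes, Gr\"onwall's inequality furnishes $Y\in L^{\infty}([0,T])$ and $D\in L^{1}([0,T])$, the criterion \eqref{eq:BKM} is verified, and \eqref{eq:sol} follows. Uniqueness is obtained by an energy estimate on the difference of two solutions in the same regularity class, reusing the commutator and vertical-integration-by-parts bounds developed for $\widetilde{w}\,\partial_{z}\omega$.
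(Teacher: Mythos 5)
Your high-level skeleton matches the paper: invoke local well-posedness and the continuation criterion \eqref{eq:BKM} from \cite{abdo2025well}, establish the maximum principle $\|\omega(t)\|_{L^\infty}\le\|\omega_0\|_{L^\infty}$, and then close higher-order a priori estimates by exploiting this $L^\infty$ bound in the treatment of the vertical advection. However, there is a genuine gap in the central step. You propose a \emph{single} coupled differential inequality for $Y(t)$ at the target regularity level $\big((3-2\alpha)/2$ for $\omega$, $\delta_m$ for $u\big)$, closed by absorbing commutator errors into $\varepsilon D(t)$ plus a Gr\"onwall term. For large data this does not close on the whole range $[\alpha_0,2]$. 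After the key interpolation of $\|\Lambda_h^{s}\omega\|_{L^p_x}$ between $\|\omega\|_{L^\infty_x}$ and a dissipative norm, the dangerous contributions take the form $C\|\omega_0\|_{L^\infty}^{a}\,\|\Lambda_h^{\alpha/2}\omega\|_{L^2}^{q}\,\|\cdot\|_{L^2}^2$, and Gr\"onwall is applicable only when the coefficient is integrable in time, i.e.\ when $q\le 2$ (the only a priori time-integrability available is $\|\Lambda_h^{\alpha/2}\omega\|_{L^2}\in L^2(0,T)$). At the target level this constraint fails precisely for $\alpha<\alpha_1=\tfrac{13-\sqrt{73}}{4}\approx 1.1140$, which still lies strictly above $\alpha_0\approx 1.1108$. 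The paper's own small-data section confirms the obstruction: the one-shot coupled estimate at the target level yields terms proportional to $Y(t)$ itself (not $\varepsilon Y(t)+\dots$), so it only closes under a smallness assumption on $\|\omega_0\|_{L^\infty}$ — exactly what Theorem~\ref{thm:GWP} is supposed to avoid.

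What is missing from your plan is the \emph{iterative bootstrap} between the regularity levels of $u$ and $\omega$: first improve $\|\Lambda_h^{\delta}u\|_{L^2}$ up to $\delta_1=\tfrac{2\alpha^2-\alpha}{2}$ using only $\|\Lambda_h^{\alpha/2}\omega\|_{L^2}\in L^2_t$; use that to bound $\|\partial_z\omega\|_{L^2}$ and then $\|\Lambda_h^{\rho}\omega\|_{L^2}$ up to some $\rho_1<\rho^*=\tfrac{3-2\alpha}{2}$; then feed the newly gained $\|\Lambda_h^{\rho_1+\alpha/2}\omega\|_{L^2}\in L^2_t$ back into the interpolation to raise $\delta$ to $\delta_2$ and subsequently $\rho$ to $\rho_2\ge\rho^*$. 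Each stage closes by Gr\"onwall because the exponent on the previously-controlled dissipative norm is kept $\le 2$; the cubic condition $2\alpha^3+3\alpha^2-4\alpha-2\ge 0$ is not a Young-absorption threshold but the compatibility condition $\delta_*\le\delta_1$ between the first $u$-improvement and the regularity needed to launch the $\partial_z\omega$ estimate. Without this multi-stage structure (or an equivalent device), your argument proves the theorem only for $\alpha\ge\alpha_1$, not down to $\alpha_0$.
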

\vskip 1em

\begin{remark}
A strong solution of \eqref{eq:main} is defined as (see \cite[Definition 1]{abdo2025well})
\begin{align*}
& u\in C\big([0,T], \mathsf{H}\big),\quad \Lambda_{h}^{\frac\alpha2}u \in L^\infty\big([0,T], L^{2}(\Omega)\big),\quad  \Lambda_{h}^{\alpha}u \in L^2\big([0,T], L^2(\Omega)\big),\\
& \omega \in L^\infty\big([0,T], L^\infty(\Omega)\big),\quad \quad  \Lambda_{h}^{\frac{\alpha}{2}}\omega \in L^2\big([0,T], L^2(\Omega)\big).
\end{align*}
It is known that such a strong solution may not be unique. A sufficient condition ensuring uniqueness is the regularity criterion \eqref{eq:BKM}. Theorem \ref{thm:GWP} establishes \eqref{eq:BKM} and therefore guarantees uniqueness of the strong solution.

The parameter $\delta_m$ in \eqref{eq:deltam} represents the minimal regularity required of $u_0$ for global well-posedness. Note that $\delta_m = \tfrac{\alpha}{2}$ when $\alpha \geq \tfrac{-7+\sqrt{193}}{6} \approx 1.149$. In this case, no additional regularity on $u_0$ is needed to guarantee uniqueness of the strong solution. Nonetheless, additional regularity is still required for $\omega_0$ and $\partial_z \omega_0$.

In \cite{abdo2025well}, it was shown that the regularity criterion \eqref{eq:BKM} also ensures global well-posedness for classical and smooth solutions. Consequently, if the initial data $u_0$ possess additional regularity, Theorem \ref{thm:GWP} yields global well-posedness for classical and smooth solutions.
\end{remark}

The key idea underlying the proof of Theorem \ref{thm:GWP} is to make full use of the a priori $L^\infty$ bound on the hydrostatic vorticity $\omega$ (see \eqref{eq:omegaMP}). This bound serves as the foundation for deriving a hierarchy of enhanced a priori energy estimates, which significantly strengthen the regularity control of the solution. We develop an iterative procedure to obtain successive a priori bounds at higher levels of regularity. As a result, we are able to relax the global well-posedness assumption from $\alpha \geq \tfrac{6}{5}$ to the improved threshold $\alpha \geq \alpha_0$, thereby extending the range of admissible dissipation exponents.  
 
\vskip 1em

Our next result focuses on the global well-posedness of \eqref{eq:main} for $\alpha\in[1,\alpha_0)$ under the assumption of small initial data.
\begin{theorem}\label{thm:small}
Let $T>0$ and $\alpha\in[1,\alpha_0)$. Assume that the initial data $u_0$ satisfies \eqref{eq:init}. Then, there exists a small constant $c>0$, such that if
\[ \|\omega_0\|_{L^\infty}<c, \]
then the system \eqref{eq:main} with initial data $u_0$ has a unique global strong solution on $[0,T]$, satisfying \eqref{eq:sol}.
\end{theorem}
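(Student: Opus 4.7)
The overall plan is to run the same hierarchy of a priori estimates that drives the proof of Theorem~\ref{thm:GWP}, but to use the smallness of $\|\omega_0\|_{L^\infty}$ to absorb the critical nonlinear terms that were previously closed by the threshold condition $\alpha\geq\alpha_0$. The starting point is the maximum principle for the hydrostatic vorticity. Differentiating the first equation of \eqref{eq:main} in $z$, using $\partial_z\widetilde{w}=-\partial_xu$ and $\partial_zp=0$, the equation for $\omega=\partial_zu$ reduces to the transport-diffusion equation
\[
\partial_t\omega + u\partial_x\omega + \widetilde{w}\partial_z\omega + \nu\Lambda_h^\alpha\omega = 0,
\]
with no vortex-stretching term. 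A standard maximum principle then gives $\|\omega(t)\|_{L^\infty}\leq\|\omega_0\|_{L^\infty}<c$ for every $t\in[0,T]$, so the smallness is propagated in time.

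The next step is the key energy estimate for $\|\Lambda_h^{(3-2\alpha)/2}\omega(t)\|_{L^2}^2$, whose production term on the left-hand side is exactly $\nu\|\Lambda_h^{(3-\alpha)/2}\omega\|_{L^2}^2$, the quantity appearing in the Beale-Kato-Majda criterion \eqref{eq:BKM}. I would apply $\Lambda_h^{(3-2\alpha)/2}$ to the vorticity equation, test against $\Lambda_h^{(3-2\alpha)/2}\omega$, and control the resulting nonlinear commutators by Kato-Ponce and Gagliardo-Nirenberg inequalities, distributing derivatives so that the factor $\omega$ appears in the $L^\infty$ norm. For $\alpha\in[1,\alpha_0)$, the borderline contribution scales precisely as $C\|\omega\|_{L^\infty}\|\Lambda_h^{(3-\alpha)/2}\omega\|_{L^2}^2$, which matches the dissipation. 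Choosing $c$ so that $Cc\leq \nu/2$, this term is absorbed into the diffusion, and a Grönwall argument yields
\[
\|\Lambda_h^{(3-2\alpha)/2}\omega\|_{L^\infty_tL^2}^2 + \nu\int_0^T\|\Lambda_h^{(3-\alpha)/2}\omega(t)\|_{L^2}^2\,\dd t<\infty,
\]
which is exactly \eqref{eq:BKM}. By the result of Abdo-Lin-Tan \cite{abdo2025well}, the local strong solution extends globally and is unique.

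The remaining bounds in \eqref{eq:sol} --- on $\Lambda_h^{\delta_m}u$, $\partial_z\omega$, and $\Lambda_h^{\alpha/2}\partial_z\omega$ --- then follow from the higher-order energy estimates used in Theorem~\ref{thm:GWP}. These stages require only Grönwall's inequality together with the already-established bound on $\int_0^T\|\Lambda_h^{(3-\alpha)/2}\omega\|_{L^2}^2\dd t$, so no additional smallness assumption is needed at these levels.

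The main obstacle is the critical nonlinear estimate itself: one must show that the dissipation-matching term on the right-hand side comes with a prefactor of exactly $\|\omega\|_{L^\infty}$, rather than a stronger norm of $\omega$ that cannot be controlled by the hypothesis. The delicate contribution is the hydrostatic transport term $\widetilde{w}\partial_z\omega=\widetilde{w}\,\partial_z^2u$, where $\widetilde{w}$ loses one horizontal derivative relative to $u$ and the vertical derivative $\partial_z^2u=\partial_z\omega$ cannot be integrated by parts horizontally. To handle this, I would exploit the zero-vertical-mean property of $\mathsf{H}$, the representation $\widetilde{w}=-\int_0^z\partial_xu\,\dd\tilde{z}$, and a Poincaré-type inequality in $z$ to trade one vertical derivative for a horizontal one via incompressibility, so that the final derivative count is balanced and the residual factor depending on $\omega$ appears only in $L^\infty$. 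Once this sharp form is in hand, the rest of the argument parallels the proof of Theorem~\ref{thm:GWP}.
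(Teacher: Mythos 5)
Your high-level strategy — propagate the maximum principle $\|\omega(t)\|_{L^\infty}\leq\|\omega_0\|_{L^\infty}$, use the smallness to absorb the critical nonlinear contribution into the dissipation, and conclude via the criterion \eqref{eq:BKM} — is the right one, and the first step (the maximum principle) is correct. However, the core of your argument has a genuine gap. You claim that a \emph{single, self-contained} energy estimate for $\|\Lambda_h^{(3-2\alpha)/2}\omega\|_{L^2}^2$ closes with a borderline term of the form $C\|\omega\|_{L^\infty}\|\Lambda_h^{(3-\alpha)/2}\omega\|_{L^2}^2$. This is not substantiated, and the structure of the nonlinearity indicates it is false: when you test the vorticity equation against $\Lambda_h^{2\rho}\omega$ with $\rho=(3-2\alpha)/2$, the hydrostatic transport term $\int\widetilde{w}\,\partial_z\omega\,\Lambda_h^{2\rho}\omega$ cannot be closed within the $\omega$-energy alone. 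After the natural integration by parts in $z$ it produces $\int\Lambda_h^{2\rho-\alpha/2}(\widetilde{w}\omega)\,\Lambda_h^{\alpha/2}\partial_z\omega$, which unavoidably brings in $\|\Lambda_h^{\alpha/2}\partial_z\omega\|_{L^2}$ and $\|\Lambda_h^{\delta+\alpha/2}u\|_{L^2}$ — dissipation norms belonging to \emph{other} energy levels. For $\alpha<\alpha_0$ these are precisely the quantities that the large-data iteration of Theorem \ref{thm:GWP} fails to control (that failure is the reason $\alpha_0$ appears in the first place), so you cannot treat them as known. The paper resolves this by coupling all three energies $\|\Lambda_h^{\delta}u\|_{L^2}^2$, $\|\Lambda_h^{\rho}\omega\|_{L^2}^2$, $\|\partial_z\omega\|_{L^2}^2$ into a single functional $X(t)$ with total dissipation $Y(t)$, showing each of the eight nonlinear terms is bounded by $C_i\|\omega_0\|_{L^\infty}^{\mu_i}X^{(1-\mu_i)/2}Y$ for a carefully chosen pair $(\rho,\delta)=(\rho^*,\delta_{**})$, and then running a barrier (bootstrap) argument rather than Gr\"onwall.

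Two further consequences of this gap. First, your closing remark that the bounds on $\Lambda_h^{\delta_m}u$, $\partial_z\omega$, and $\Lambda_h^{\alpha/2}\partial_z\omega$ ``follow afterwards by Gr\"onwall'' is circular: in the regime $\alpha\in[1,\alpha_0)$ those bounds are prerequisites for, not consequences of, the $\omega$-estimate, which is why they must be established \emph{simultaneously}. Second, your smallness condition ``$Cc\leq\nu/2$'' suggests $c$ is independent of the data size; because several exponents satisfy $\mu_i<1$, the correct constant necessarily depends on $X_0=\|\Lambda_h^{\delta_m}u_0\|_{L^2}^2+\|\Lambda_h^{(3-2\alpha)/2}\omega_0\|_{L^2}^2+\|\partial_z\omega_0\|_{L^2}^2$, as in \eqref{eq:c}. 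You would also need to address the critical case $\alpha=1$ separately, where one of the interpolation exponents degenerates.
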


\begin{remark}
A small-data global well-posedness result was established in \cite{abdo2025well}, requiring
\[
 \|\omega_0\|_{L^\infty}<c,\quad\text{and}\quad 
 \|\Lambda_{h}^\alpha u_{0}\|^2_{L^2}+\|\Lambda_{h}^{\frac{\alpha}{2}}\omega_{0}\|_{L^{2}}^{2}+ \|\partial_{z}\omega_{0}\|^2_{L^2}<c.
\]
In contrast, our result does not impose any smallness condition on $\|\Lambda_{h}^\alpha u_{0}\|^2_{L^2}+\|\Lambda_{h}^{\frac{\alpha}{2}}\omega_{0}\|_{L^{2}}^{2}+ \|\partial_{z}\omega_{0}\|^2_{L^2}$. In fact, this quantity need not even be bounded. 
Under the assumption \eqref{eq:init}, we only require
\[
X_0 = \|\Lambda_{h}^{\delta_m} u_{0}\|^2_{L^2}+\|\Lambda_{h}^{\frac{3-2\alpha}{2}}\omega_{0}\|_{L^{2}}^{2}+ \|\partial_{z}\omega_{0}\|^2_{L^2}<\infty.
\]
The small constant c appearing in \eqref{eq:c} is given explicitly and may depend on $\nu$ and $X_0$.
Crucially, however, smallness of $X_0$ itself is not required.
Hence, our result substantially relaxes the smallness assumption compared to \cite{abdo2025well}.

Moreover, our result extends to the critical regime, when $\alpha = 1$.
Together with the ill-posedness result for $\|\omega_0\|_{L^\infty} \gg \nu$ in the critical regime (see \cite[Theorem 5.2]{abdo2025well}), this demonstrates a sharp transition in the solution behavior: from ill-posedness to well-posedness.
\end{remark}

The proof of Theorem \ref{thm:small} utilizes the same a priori energy estimates used in the proof of Theorem \ref{thm:GWP}. Consequently, the regularity assumptions on the initial data are consistent across the two results.

\vskip 1em
The rest of the paper is organized as follows. In Section \ref{sec:prelim}, we present preliminary results, including several a priori bounds and functional inequalities. Section \ref{sec:apriori} is devoted to establishing enhanced a priori energy estimates. Building on these estimates, we prove our main results, Theorems \ref{thm:GWP} and \ref{thm:small}, in Sections \ref{sec:GWP} and \ref{sec:GWPsmall}, respectively.

\subsection*{Notations}
We use the symbol $C$ to denote a generic positive constant, which may vary from line to line. The dependence of $C$ on specific parameters will be clear from the context and explicitly indicated whenever necessary.

We write $L^p_x = L^p_x(\T)$ and $L^p_z = L^p_z([0,1])$ for the Lebesgue spaces in the $x$- and $z$-variables, respectively.
For a function $f = f(x,z)$, we define the mixed norm
\[
\|f\|_{L_{z}^{q}L_{x}^{p}}\triangleq \Big\|\|f\|_{L_{x}^{p}}\Big\|_{L_{z}^{q}}
= \bigg(\int_0^1\Big(\int_\T f(x,z)^p\,\dd x\Big)^{\frac{q}{p}}\,\dd z\bigg)^{\frac{1}{q}},
\]
and denote $L^p = L^p(\Omega) = L^p_zL^p_x$.

%We also point out that the main focus of this paper is the case $\alpha\in[1,\frac{6}{5})$.

\section{Preliminaries}\label{sec:prelim}
\subsection{A priori bounds}
We collect several useful \textit{a priori} bounds for the solution of \eqref{eq:main}, with the proofs given in \cite{abdo2025well}.

We start with the basic $L^2$ bound on $u$:
\begin{equation}\label{eq:uL2}
 \|u(t)\|^2_{L^2} + \nu\int_0^t\|\Lambda_{h}^{\frac{\alpha}{2}}u(\tau)\|_{L^2}^2  \dd\tau\leq
\|u_{0}\|^2_{L^2}.
\end{equation}

Differentiating \eqref{eq:main}$_1$ with respect to $z$, we obtain the evolution equation for the hydrostatic vorticity $\omega = \partial_zu$:
\begin{equation}\label{eq:omega}
\partial_t \omega + (u\partial_{x}+\widetilde{w}\partial_{z})\omega +\nu\Lambda_{h}^{\alpha}\omega= 0.
\end{equation}
An energy estimate for \eqref{eq:omega} gives a same type of $L^2$ bound as \eqref{eq:uL2}:
\begin{equation}\label{eq:omegaL2}
\|\omega(t)\|^2_{L^2}
   + \nu\int_0^t\|\Lambda_{h}^{\frac{\alpha}{2}}\omega(\tau)\|_{L^2}^2  \dd\tau\leq
\|\omega_{0}\|^2_{L^2}.
\end{equation}
Moreover, $\omega$ satisfies the following maximum principle:
\begin{equation}\label{eq:omegaMP}
\|\omega(t)\|_{L^{\infty}} \leq\|\omega_{0}\|_{L^{\infty}}.
\end{equation}

Thanks to the fact that $\int_{0}^{1}u(x,z,t)\,dz=0$ (see \cite[Lemma 2.1]{abdo2025well}), we apply Poincar\'e inequality in $z$-direction and obtain
\begin{equation}\label{eq:uLinf}
  \|u(t)\|_{L^{\infty}}\leq C\|\partial_zu(t)\|_{L^{\infty}} \leq C\|\omega_0\|_{L^{\infty}}.
\end{equation}
Applying the same Poincar\'e inequality also yields control of the vertical velocity $\widetilde{w}$:
\begin{equation}\label{eq:wPoincare}
  \|\widetilde{w}\|_{L^p_z}\leq C\|\partial_z \widetilde{w}\|_{L^p_z} = C\|\partial_x u\|_{L^p_z},\quad \forall\,1\leq p\leq \infty.
\end{equation}
We will also make use of the following anisotropic estimate, which provides stronger control of $\widetilde{w}$:
\begin{equation}\label{eq:wAniso}
 \|\widetilde{w}\|_{L^\infty_z L^p_x}\leq C	\|\widetilde{w}\|_{L^p_x L^\infty_z} \leq C \|\partial_xu\|_{L^p_x L^2_z}\leq C\|\partial_xu\|_{L^2_z L^p_x},\quad\forall\,2\leq p<\infty,
\end{equation}
where we have used Minkowski's inequality (twice) and Sobolev embedding in the intermediate step.

Next, for $\alpha\in(1,2]$,
\begin{equation}\label{eq:uE1}
\|\Lambda_{h}^{\frac{\alpha}{2}}u(t)\|^2_{L^2}
   + \nu\int_0^t\|\Lambda_{h}^{\alpha}u(\tau)\|_{L^2}^2  \dd\tau\leq C,
\end{equation}
where $C$ depends on $\|\Lambda_{h}^{\frac{\alpha}{2}}u_0\|_{L^2}$, $\|\omega_0\|_{L^\infty}$ and $t$. 

\subsection{Useful inequalities} We list the following classical functional inequalities which will be used in our analysis.
\begin{lemma}[Fractional Leibniz rule]\label{lem:Leibniz}
 Let $s>0$. Assume that 
 \[\frac{1}{r}=\frac{1}{p_1}+\frac{1}{q_1}=\frac{1}{p_2}+\frac{1}{q_2},\quad \text{where}\quad r,\, q_1,\, p_2\in(1,\infty),\quad\text{and}\quad p_1,\, q_2\in[1,\infty].\]
Then we have
\begin{equation}\label{eq:Leib}
 \|\Lambda_{h}^s(fg)\|_{L_{x}^r}
  \leq C\Big(\|f\|_{L_{x}^{p_1}}\|\Lambda_{h}^{s}g \|_{L_{x}^{q_1}} + \|\Lambda_{h}^{s} f \|_{L_{x}^{p_2}} \| g\|_{L_{x}^{q_2}}\Big),
\end{equation}
where $C$ depends on the parameters $s, r, p_1, q_1, p_2$ and $q_2$.	
\end{lemma}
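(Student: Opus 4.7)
The plan is to prove this via Bony's paraproduct decomposition, which is the standard route to Kato-Ponce type fractional Leibniz inequalities. First, since $\Lambda_{h}^{s}$ acts only on the horizontal variable $x\in\T$, I would freeze the vertical coordinate $z$ and reduce to a purely one-dimensional statement: for each fixed $z$ the inequality becomes the classical 1D fractional Leibniz rule applied to $f(\cdot,z)g(\cdot,z)$, and taking $L_x^r$ norms then yields \eqref{eq:Leib} (with $z$ treated as a silent parameter). The 1D version on $\T$ follows from the one on $\R$ by periodic lifting (or equivalently by working with Fourier series and the Littlewood-Paley blocks adapted to $\T$), so it suffices to prove the Euclidean 1D statement.

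For the 1D Euclidean case I would apply Bony's decomposition
\[
 fg = T_{f}g + T_{g}f + R(f,g),
\]
where $T_{f}g=\sum_{j}S_{j-3}f\,\Delta_{j}g$ is the low-high paraproduct, $T_{g}f$ is its symmetric counterpart with $(p_2,q_2)$ playing the role of $(p_1,q_1)$, and $R(f,g)=\sum_{|j-k|\le 2}\Delta_{j}f\,\Delta_{k}g$ is the resonant remainder. Since the $j$-th block of $T_{f}g$ is supported in a fixed annulus around frequency $2^{j}$, the operator $\Lambda_{h}^{s}$ acts on it essentially as multiplication by $2^{js}$, so that
\[
 \Lambda_{h}^{s}(T_{f}g) \approx \sum_{j} S_{j-3}f\,\Lambda_{h}^{s}\Delta_{j}g.
\]
Combining the pointwise bound $|S_{j-3}f|\le C\,Mf$ with $M$ the Hardy-Littlewood maximal operator, the Littlewood-Paley characterization of $L_x^r$ for $r\in(1,\infty)$, H\"older's inequality with exponents $(p_1,q_1)$, and boundedness of $M$ on $L_x^{p_1}$ for $p_1\in(1,\infty]$ gives
\[
 \|\Lambda_{h}^{s}T_{f}g\|_{L_x^r}\le C\|f\|_{L_x^{p_1}}\|\Lambda_{h}^{s}g\|_{L_x^{q_1}}.
\]
The term $T_{g}f$ is bounded identically with $(p_2,q_2)$ in place of $(p_1,q_1)$. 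For the resonant term, each $\Lambda_{h}^{s}(\Delta_{j}f\,\Delta_{k}g)$ is supported at frequency at most $C\,2^{\max(j,k)}$, so the factor $2^{js}$ can be moved onto either side of the pair, and a geometric summation over $j$ (permissible because $s>0$) reduces the resonant bound to one of the two paraproduct estimates already established.

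The main technical obstacle lies in the endpoint cases $p_1=\infty$ or $q_2=\infty$, where one loses the standard $L^p$ boundedness of the maximal function and must argue by a direct $L^\infty$ estimate on the undifferentiated factor while keeping tight control over the Littlewood-Paley square function of the differentiated factor; this is precisely the refinement of Kato-Ponce established by Grafakos-Oh and Muscalu-Schlag, and the constant $C$ in \eqref{eq:Leib} comes from tracking the norms of the maximal and square-function operators at each stage. Apart from this endpoint care, the remaining work is routine bookkeeping of Littlewood-Paley indices.
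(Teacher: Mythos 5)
The paper offers no proof of this lemma: it is stated in the Preliminaries as a classical functional inequality (the Kato--Ponce/Kenig--Ponce--Vega fractional Leibniz rule), so there is no internal argument to compare yours against. Your paraproduct sketch is the standard proof of this result and is correct in outline: the reduction to one dimension by freezing $z$, the transference from $\mathbb{R}$ to $\mathbb{T}$, the Bony decomposition with the two paraproducts estimated via the maximal function and the Littlewood--Paley square-function characterization of $L^r$ for $r\in(1,\infty)$, and the remainder handled by the low-frequency summation lemma (where $s>0$ is essential, since $\Delta_j f\,\Delta_k g$ with $|j-k|\le 2$ is supported in a ball of radius $C2^j$ rather than an annulus --- you correctly flag this). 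Two small remarks: first, the constraint $1/r=1/p_1+1/q_1$ with $r>1$ already forces $p_1>1$ and $q_2>1$, so the only endpoints in play are $p_1=\infty$ or $q_2=\infty$, and these are the \emph{easy} ones (the undifferentiated factor is simply pulled out in $L^\infty$; no maximal-function boundedness is needed); the genuinely delicate Kato--Ponce endpoints of Grafakos--Oh and Bourgain--Li concern the differentiated factor in $L^\infty$ or the range $r\le 1$, both excluded here since $q_1,p_2\in(1,\infty)$. Second, your displayed claim $\Lambda_h^s(T_fg)\approx\sum_j S_{j-3}f\,\Lambda_h^s\Delta_jg$ should be justified by a commutator-free argument --- one bounds $\|\Lambda_h^sT_fg\|_{L^r}$ by the square function $\bigl\|\bigl(\sum_j 2^{2js}|S_{j-3}f\,\Delta_jg|^2\bigr)^{1/2}\bigr\|_{L^r}$ using that each summand of $T_fg$ is frequency-localized in an annulus, rather than literally commuting $\Lambda_h^s$ past $S_{j-3}f$. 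With those points tidied, the argument is complete and consistent with the literature the paper implicitly relies on.
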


\begin{lemma}[Interpolations]\label{lem:interpolation}
 The following two types of interpolation inequalities hold:
 \begin{enumerate}
 \item Let $s_1\leq s \leq s_2$. Then
 \begin{equation}\label{eq:interp1}
\|\Lambda_h^{s}	f\|_{L^2_x}\leq \|\Lambda_h^{s_1}	f\|_{L^2_x}^{1-\theta}\|\Lambda_h^{s_2}	f\|_{L^2_x}^\theta,\quad \text{where}\quad \theta=\frac{s-s_1}{s_2-s_1}. 	
 \end{equation}
 \item Let $0\leq j<m$ and $p=\frac{2m}{j}$. Then
 \begin{equation}\label{eq:interp2}
\|\Lambda_h^{j}\omega\|_{L^p_x}\leq \|\omega\|_{L^\infty_x}^{1-\theta}\|\Lambda_h^{m}\omega\|_{L^2_x}^\theta,\quad \text{where}\quad \theta=\frac{2}{p}=\frac{j}{m}. 	
 \end{equation}
 \end{enumerate}
\end{lemma}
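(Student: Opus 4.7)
The plan is to verify the two interpolation inequalities separately; both are classical, and the proofs are mostly algebraic once the right functional-analytic framework is set up.

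For Part (1), I would work on the Fourier side. Since $\Lambda_h$ acts only in the periodic variable $x$, for each fixed $z$ we have $\widehat{\Lambda_h^s f}(k)=|k|^s\widehat f(k)$ for $k\ne 0$, and by Plancherel,
\[
\|\Lambda_h^s f\|_{L^2_x}^2 = \sum_{k\in\mathbb{Z}}|k|^{2s}|\widehat f(k)|^2.
\]
The linear relation $s=(1-\theta)s_1+\theta s_2$ lets me factor
\[
|k|^{2s}|\widehat f(k)|^2 = \bigl(|k|^{2s_1}|\widehat f(k)|^2\bigr)^{1-\theta}\bigl(|k|^{2s_2}|\widehat f(k)|^2\bigr)^{\theta},
\]
and H\"older's inequality on $\ell^1$ with dual exponents $1/(1-\theta)$ and $1/\theta$ yields
\[
\sum_k|k|^{2s}|\widehat f(k)|^2 \le \Bigl(\sum_k|k|^{2s_1}|\widehat f(k)|^2\Bigr)^{1-\theta}\Bigl(\sum_k|k|^{2s_2}|\widehat f(k)|^2\Bigr)^{\theta}.
\]
Taking square roots gives \eqref{eq:interp1}. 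The zero Fourier mode can be absorbed by convention into the $s_1=0$ case.

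Part (2) is a one-dimensional Gagliardo--Nirenberg inequality, with the scaling-critical choice $p=2m/j$ so that $\theta=2/p=j/m$. I would prove it via a Littlewood--Paley decomposition $\omega=\sum_{q\ge -1}\Delta_q\omega$ in the $x$ variable together with Bernstein's inequalities. The idea is to split $\Lambda_h^j\omega = S_N\Lambda_h^j\omega+(I-S_N)\Lambda_h^j\omega$ and optimize the cutoff level $N$. The low-frequency piece is controlled via $L^\infty$-boundedness of $S_N$ and Bernstein's inequality:
\[
\|S_N\Lambda_h^j\omega\|_{L^p_x}\le \|S_N\Lambda_h^j\omega\|_{L^\infty_x}\lesssim 2^{jN}\|\omega\|_{L^\infty_x}.
\]
For the high-frequency piece, I would combine the $L^p$ square-function characterization of $\|\cdot\|_{L^p_x}$ (valid for $1<p<\infty$) with Bernstein's $L^2\to L^p$ inequality and the relation $\|\Delta_q\omega\|_{L^2_x}\lesssim 2^{-mq}\|\Lambda_h^m\Delta_q\omega\|_{L^2_x}$ to obtain a geometrically decaying tail bound of the form $\lesssim 2^{-cN}\|\Lambda_h^m\omega\|_{L^2_x}$ for some $c>0$. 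Optimizing $N$ balances the two contributions and yields \eqref{eq:interp2}.

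The main technical obstacle in Part (2) is precisely this critical scaling $p=2m/j$: a crude $\ell^1$ triangle-inequality summation of the Littlewood--Paley pieces fails to converge, and one has to work with the square function (interchanging $\ell^2$ and $L^p$ norms) to recover the correct exponent in the high-frequency bound. Alternatively, since \eqref{eq:interp2} is a standard one-dimensional Gagliardo--Nirenberg inequality, one may bypass the dyadic argument entirely and simply invoke the classical reference.
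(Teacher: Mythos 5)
The paper gives no proof of this lemma: it is listed among the ``classical functional inequalities'' and used as a black box, so there is nothing of the authors' to compare against line by line. Your Part (1) is correct and complete — Plancherel plus H\"older on the Fourier coefficients with exponents $\frac{1}{1-\theta}$ and $\frac{1}{\theta}$ gives \eqref{eq:interp1} with constant one, and the zero mode is handled as you indicate.

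In Part (2), however, the dyadic argument as you sketch it does not close. You rightly flag the critical exponent $p=2m/j$ as the obstruction, but the proposed repair — a tail bound $\|(I-S_N)\Lambda_h^j\omega\|_{L^p_x}\lesssim 2^{-cN}\|\Lambda_h^m\omega\|_{L^2_x}$ for a \emph{global} cutoff $N$, balanced against $2^{jN}\|\omega\|_{L^\infty_x}$ — cannot yield $\theta=j/m$. Block-wise Bernstein from $L^2_x$ to $L^p_x$ costs $2^{q(\frac12-\frac1p)}$, so the best decay rate available this way is $c=m-j-\frac12+\frac1p=(m-j)\frac{2m-1}{2m}<m-j$, and balancing $2^{jN}A\sim 2^{-cN}B$ returns $A^{c/(j+c)}B^{j/(j+c)}$ with $\frac{j}{j+c}=\frac{2mj}{2m^2-m+j}>\frac{j}{m}$, i.e.\ a strictly worse exponent; the square function does not rescue a global $N$, since $\big\|\big(\sum_{q>N}2^{2mq}|\Delta_q\omega|^2\big)^{1/2}\big\|_{L^p_x}$ with $p>2$ is not controlled by $\|\Lambda_h^m\omega\|_{L^2_x}$ (and interpolating each block between $L^\infty_x$ and $L^2_x$ lands exactly on the divergent borderline). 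The missing idea is to optimize the cutoff \emph{pointwise in $x$}: setting $G(x)=\big(\sum_q 2^{2mq}|\Delta_q\omega(x)|^2\big)^{1/2}$, one gets $\big(\sum_q 2^{2jq}|\Delta_q\omega(x)|^2\big)^{1/2}\lesssim \min_N\big(2^{jN}\|\omega\|_{L^\infty_x}+2^{(j-m)N}G(x)\big)\lesssim \|\omega\|_{L^\infty_x}^{1-j/m}G(x)^{j/m}$, and then $\|G^{j/m}\|_{L^p_x}=\|G\|_{L^2_x}^{j/m}\sim\|\Lambda_h^m\omega\|_{L^2_x}^{j/m}$ precisely because $pj/m=2$; the Littlewood--Paley characterization of $L^p_x$ for $2<p<\infty$ then gives \eqref{eq:interp2}. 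Your fallback of simply citing the Gagliardo--Nirenberg literature is of course legitimate, and is in effect what the paper does.
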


\section{Enhanced a priori energy bounds}\label{sec:apriori}
In this section, we develop a series of enhanced a priori energy estimates that play a crucial role in our analysis. The central idea is to exploit the a priori bound on  $\|\omega(t)\|_{L^\infty}$ given in \eqref{eq:omegaMP} and, building on this, to derive enhanced energy bounds for higher-order Sobolev norms.

We first present an improved a priori regularity estimate for $u$.
\begin{lemma}\label{lem:uimprove}
Let $T>0$ and $\alpha\in(1,\frac{6}{5})$. Then we have
\begin{equation}\label{eq:uimprove}
\|\Lambda_{h}^{\delta}u(t)\|^2_{L^2} + \nu\int_0^t\|\Lambda_{h}^{\delta+\frac{\alpha}{2}}u(\tau)\|_{L^2}^2  \dd\tau\leq C,\quad\forall\, t\in[0,T],
\end{equation}
for any $\delta\in [0,\delta_1]$, where
\begin{align}\label{eq:delta1}
\delta_1 \triangleq \frac{2\alpha^{2}-\alpha}{2},
\end{align}
and $C$ depends on $\|\Lambda_{h}^{\delta}u_0\|_{L^2}$, $\|\omega_0\|_{L^\infty}$ and $T$.
\end{lemma}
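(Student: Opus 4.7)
My plan is to perform an energy estimate at the $\Lambda_h^\delta$ level, leveraging the a priori $L^\infty$ bounds on $u$ and $\omega$ together with the base energy control \eqref{eq:uE1}. Apply $\Lambda_h^\delta$ to \eqref{eq:main}$_1$ and pair the result with $\Lambda_h^\delta u$ in $L^2(\Omega)$. The pressure term vanishes: since $\partial_z p=0$ forces $p=p(x,t)$, we have
\[
\int_\Omega \Lambda_h^\delta(\partial_x p)\,\Lambda_h^\delta u\,\dd x\dd z = \int_\T \partial_x\Lambda_h^\delta p(x,t)\cdot \Lambda_h^\delta\!\int_0^1 u(x,z,t)\,\dd z\,\dd x = 0,
\]
using $\int_0^1 u\,\dd z=0$. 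This yields
\[
\tfrac{1}{2}\tfrac{d}{dt}\|\Lambda_h^\delta u\|_{L^2}^2 + \nu\|\Lambda_h^{\delta+\frac{\alpha}{2}}u\|_{L^2}^2 = -\int_\Omega \Lambda_h^\delta\bigl(u\partial_x u + \widetilde w\partial_z u\bigr)\,\Lambda_h^\delta u\,\dd x\dd z.
\]

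Next, I would exploit the divergence structure $u\partial_x u+\widetilde w\partial_z u = \partial_x(u^2)+\partial_z(u\widetilde w)$ (which follows from $\partial_x u + \partial_z\widetilde w=0$) and integrate by parts in $x$ and $z$; the boundary terms vanish because $\widetilde w|_{z=0,1}=0$. The right-hand side then becomes
\[
N_1 + N_2 \triangleq \int_\Omega \Lambda_h^\delta(u^2)\,\partial_x\Lambda_h^\delta u\,\dd x\dd z + \int_\Omega \Lambda_h^\delta(u\widetilde w)\,\Lambda_h^\delta\omega\,\dd x\dd z.
\]
To handle these pieces I would apply the fractional Leibniz rule (Lemma \ref{lem:Leibniz}) in the horizontal variable, H\"older in $z$, and the anisotropic bound \eqref{eq:wAniso} for $\widetilde w$. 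The uniform bounds $\|u\|_{L^\infty}\le C\|\omega_0\|_{L^\infty}$ from \eqref{eq:uLinf} and $\|\omega\|_{L^\infty}\le\|\omega_0\|_{L^\infty}$ from \eqref{eq:omegaMP} remove the lowest-order factors, reducing each term to a product involving $\|\Lambda_h^\delta u\|_{L^2}$, $\|\Lambda_h^{\delta+1}u\|_{L^p}$ (from the derivative loss in $\widetilde w$), and $\|\Lambda_h^\delta\omega\|_{L^2}$. The high-order pieces are then interpolated via Lemma \ref{lem:interpolation} against the dissipation $\|\Lambda_h^{\delta+\frac{\alpha}{2}}u\|_{L^2}$, together with $\|\Lambda_h^{\alpha/2}u\|_{L^\infty_t L^2}$ and $\|\Lambda_h^\alpha u\|_{L^2_t L^2}$ from \eqref{eq:uE1} and $\|\Lambda_h^{\alpha/2}\omega\|_{L^2_t L^2}$ from \eqref{eq:omegaL2}.

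I expect the main obstacle to be the exponent bookkeeping that produces the threshold $\delta_1=\alpha^2-\frac{\alpha}{2}$. The loss of one horizontal derivative in $\widetilde w$ forces both $N_1$ and $N_2$ to involve $\Lambda_h^{\delta+1}u$, whose order overshoots the dissipation $\delta+\frac{\alpha}{2}$ whenever $\alpha<2$. Closing the estimate requires interpolating this excess against the higher-order bounds available from \eqref{eq:uE1}--\eqref{eq:omegaL2}, and demanding that every resulting Young/H\"older/interpolation exponent stays in $[0,1]$ yields the algebraic constraint $\delta\le\alpha^2-\frac{\alpha}{2}$. Once this balance holds, each nonlinear contribution is controlled by $\tfrac{\nu}{4}\|\Lambda_h^{\delta+\frac{\alpha}{2}}u\|_{L^2}^2 + C\,\Phi(t)\,\|\Lambda_h^\delta u\|_{L^2}^2 + C\,\Psi(t)$ with $\Phi,\Psi\in L^1(0,T)$ determined by the priorly established bounds. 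Absorbing the dissipative part into the left-hand side and invoking Grönwall's inequality then yields \eqref{eq:uimprove}.
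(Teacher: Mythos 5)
Your setup (the vanishing of the pressure term, the divergence form of the nonlinearity, and the treatment of the $u\partial_x u$ piece) is fine, but the integration by parts in $z$ that turns the vertical transport term into $N_2=\int_\Omega \Lambda_h^\delta(u\widetilde w)\,\Lambda_h^\delta\omega\,\dxdz$ breaks the argument. After this step you must place $\Lambda_h^\delta\omega$ in $L^2$, and this quantity is \emph{not} a priori controlled: the only available bounds on the vorticity at this stage are $\|\omega\|_{L^\infty}$, $\|\omega\|_{L^2}$, and $\|\Lambda_h^{\alpha/2}\omega\|_{L^2}\in L^2(0,T)$, while $\delta$ ranges up to $\delta_1=\frac{2\alpha^2-\alpha}{2}>\frac\alpha2$. (Controlling $\|\Lambda_h^\rho\omega\|_{L^2}$ for $\rho>0$ is precisely the content of the later Lemma \ref{lem:omegaimprove}, which takes the present lemma as input, so your route is circular.) There is no dissipation in $\omega$ in this energy identity to absorb those derivatives either; if you instead shift all but $\alpha/2$ horizontal derivatives back onto $u\widetilde w$, the loss of one derivative in $\widetilde w$ forces $2\delta-\frac\alpha2+1\le\delta+\frac\alpha2$, i.e.\ $\delta\le\alpha-1$, far below $\delta_1$. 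The paper avoids this entirely by \emph{not} integrating by parts in $z$: it keeps $\widetilde w\,\omega$ intact, writes the term as $\int\Lambda_h^{\delta-\frac\alpha2}(\widetilde w\omega)\,\Lambda_h^{\delta+\frac\alpha2}u$, so that only $\delta-\frac\alpha2$ derivatives land on $\omega$, and those are measured in $L^p_x$ rather than $L^2_x$.

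Relatedly, your explanation of where $\delta_1$ comes from (``every Young/H\"older/interpolation exponent stays in $[0,1]$'') is not the actual mechanism. The threshold arises because $\|\Lambda_h^{\delta-\frac\alpha2}\omega\|_{L^p_x}$ is interpolated, via \eqref{eq:interp2} with the specific choice $p=\frac{2\alpha}{2\delta-\alpha}$, between the maximum principle $\|\omega\|_{L^\infty_x}$ and $\|\Lambda_h^{\alpha/2}\omega\|_{L^2_x}$; after Young's inequality the Gr\"onwall coefficient carries $\|\Lambda_h^{\alpha/2}\omega\|_{L^2}$ to the power $\frac{2\theta_{12}}{1-\theta_{11}}$, and since this quantity is only known to lie in $L^2(0,T)$ by \eqref{eq:omegaL2}, one needs that power to be at most $2$ — which is exactly equivalent to $\delta\le\delta_1$. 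Without this specific interpolation against the $L^\infty$ bound of $\omega$ (the ``enhancement'' the paper emphasizes), the exponent bookkeeping you describe does not close for any $\delta$ near $\delta_1$.
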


\begin{remark}
Since $\delta_1 > \tfrac{\alpha}{2}$ whenever $\alpha > 1$, Lemma \ref{lem:uimprove} yields an enhanced energy bound compared to \eqref{eq:uE1}.
\end{remark}

\begin{proof}[Proof of Lemma \ref{lem:uimprove}]
For $\delta\in[0,\frac\alpha2]$, \eqref{eq:uimprove} follows from \eqref{eq:uL2} and \eqref{eq:uE1}. In the following, we assume $\delta>\frac\alpha2$.

Taking the $L^2$ inner product of equation $\eqref{eq:main}_{1}$ with $\Lambda_{h}^{2\delta}u$, it yields
\begin{align}\label{eq:udelta}
 \frac{1}{2}\frac{\dd}{\dd t}
  \|\Lambda_{h}^{\delta}u\|^2_{L^2}
  +\nu \|\Lambda_{h}^{\delta+\frac{\alpha}{2}}u\|_{L^2}^2 &= - \int_{\Omega}u\,\partial_{x}u\, \Lambda_{h}^{2\delta}u\, \dxdz-\int_{\Omega}\widetilde{w}\,\omega\, \Lambda_{h}^{2\delta}u\, \dxdz\triangleq U_{1}+U_{2}.
\end{align}

For $U_1$, we apply fractional Leibniz rule \eqref{eq:Leib}, \eqref{eq:uLinf}, interpolation \eqref{eq:interp1}, and Young's inequality to get
\begin{align}\label{eq:U1}
U_{1} 
&= -\frac{1}{2}\int_{\Omega}\Lambda_{h}^{\delta-\frac{\alpha}{2}}\partial_{x}(u^2)\, \Lambda_{h}^{\delta+\frac{\alpha}{2}}u \,\dxdz
\leq C\|\Lambda_{h}^{\delta-\frac{\alpha}{2}+1}(u^2)\|_{L^2} \|\Lambda_{h}^{\delta+\frac{\alpha}{2}}u\|_{L^2}\nonumber\\
& \leq C\|u\|_{L^{\infty}}\|\Lambda_{h}^{\delta-\frac{\alpha}{2}+1}u\|_{L^2}\|\Lambda_{h}^{\delta+\frac{\alpha}{2}}u\|_{L^2}
\leq C\|\omega_0\|_{L^{\infty}}\|\Lambda_{h}^{\delta}u\|_{L^2}^{\frac{2\alpha-2}{\alpha}} \|\Lambda_{h}^{\delta+\frac{\alpha}{2}}u\|_{L^2}^{\frac{2}{\alpha}}  \nonumber\\
&\leq \frac{\nu}{4} \|\Lambda_{h}^{\delta+\frac{\alpha}{2}}u\|_{L^2}^2 + C\|\omega_0\|_{L^{\infty}}^{\frac{\alpha}{\alpha-1}} \|\Lambda_{h}^{\delta} u\|_{L^2}^2.
\end{align}

For $U_{2}$, we apply fractional Leibniz rule \eqref{eq:Leib} and obtain
\begin{align}\label{eq:U2}
U_{2} & = - \int_{\Omega}\Lambda_{h}^{\delta-\frac{\alpha}{2}}(\widetilde{w}\omega)\,\Lambda_{h}^{\delta+\frac{\alpha}{2}}u \,\dxdz
\leq C\|\Lambda_{h}^{\delta-\frac{\alpha}{2}}(\widetilde{w}\omega)\|_{L^2} \|\Lambda_{h}^{\delta+\frac{\alpha}{2}}u\|_{L^2}. \nonumber\\
 & \leq C\Big\|\|\omega\|_{L_{x}^{\infty}}\|\Lambda_{h}^{\delta-\frac{\alpha}{2}} \widetilde{w}\|_{L_{x}^2}+\|\widetilde{w}\|_{L_{x}^{\frac{2p}{p-2}}} \|\Lambda_{h}^{\delta-\frac{\alpha}{2}}\omega\|_{L_{x}^{p}}\Big\|_{L^2_z}\|\Lambda_{h}^{\delta+\frac{\alpha}{2}}u\|_{L^2} \nonumber\\
 & \leq C \Big(\|\omega\|_{L^{\infty}}\|\Lambda_{h}^{\delta-\frac{\alpha}{2}} \widetilde{w}\|_{L^2} + \|\widetilde{w}\|_{L_{z}^{\infty}L_{x}^{\frac{2p}{p-2}}} \|\Lambda_{h}^{\delta-\frac{\alpha}{2}}\omega\|_{L_{z}^{2}L_{x}^{p}}\Big)\|\Lambda_{h}^{\delta+\frac{\alpha}{2}}u\|_{L^2} \nonumber\\
 & \leq C \Big(\|\omega_0\|_{L^{\infty}}\|\Lambda_{h}^{\delta-\frac{\alpha}{2}} \partial_xu\|_{L^2} + \|\partial_xu\|_{L_{z}^{2}L_{x}^{\frac{2p}{p-2}}} \|\Lambda_{h}^{\delta-\frac{\alpha}{2}}\omega\|_{L_{z}^{2}L_{x}^{p}}\Big)\|\Lambda_{h}^{\delta+\frac{\alpha}{2}}u\|_{L^2}
 \triangleq U_{21} + U_{22},
\end{align}
where the parameter $p\in(2,\infty)$ will be chosen later in \eqref{eq:p}.
Note that in the last inequality, we have used \eqref{eq:omegaMP} and \eqref{eq:wPoincare} for the first term, and the anisotropic estimate \eqref{eq:wAniso} for the second term.

The term $U_{21}$ can be estimated by interpolation \eqref{eq:interp1} as follows:
\begin{equation}\label{eq:U21}
 U_{21} \leq C \|\omega_0\|_{L^\infty} \|\Lambda_{h}^{\delta} u\|_{L^2}^{\frac{2\alpha-2}{\alpha}}\|\Lambda_{h}^{\delta+\frac\alpha2} u\|_{L^2}^{\frac2\alpha}
 \leq \frac{\nu}{8} \|\Lambda_{h}^{\delta+\frac{\alpha}{2}}u\|_{L^2}^2 + C\|\omega\|_{L^{\infty}}^{\frac{\alpha}{\alpha-1}}\|\Lambda_{h}^{\delta} u\|_{L^2}^2.
\end{equation}

For $U_{22}$, we apply Sobolev embedding and interpolation and get
\begin{equation}\label{eq:ux}
\|\partial_{x}u\|_{L_{z}^{2}L_{x}^{\frac{2p}{p-2}}} \leq  C\|\Lambda_{h}^{1+\frac{1}{p}}u\|_{L^{2}}
\leq C\|\Lambda_{h}^{\delta}u\|_{L^{2}}^{1-\theta_{11}} \|\Lambda_{h}^{\delta+\frac{\alpha}{2}}u\|_{L^{2}}^{\theta_{11}},\quad \text{where}\quad \theta_{11} = \tfrac{2-2\delta+\frac{2}{p}}{\alpha}.
%= C\|\Lambda_{h}^{\delta}u\|_{L^{2}}^{2\alpha-2} \|\Lambda_{h}^{\delta+\frac{\alpha}{2}}u\|_{L^{2}}^{3-2\alpha}.
\end{equation}
The main enhancement comes from the estimate of $\|\Lambda_{h}^{\delta-\frac{\alpha}{2}}\omega\|_{L^p_x}$. Thanks to the a priori bounds \eqref{eq:omegaMP} and \eqref{eq:omegaL2},  we use \eqref{eq:interp2} and interpolate this term by $\|\omega\|_{L^\infty_x}$ and $\|\Lambda_{h}^{\frac{\alpha}{2}}\omega\|_{L^2_x}$. 
Choosing 
\begin{equation}\label{eq:p}
  p \triangleq \frac{2\alpha}{2\delta-\alpha} \in (2,\infty),\quad \text{so that} \quad
  \theta_{12}=\frac{2}{p} = \frac{\delta-\frac\alpha2}{\frac\alpha2},
\end{equation}
we have
\begin{equation}\label{eq:omegaenhance}
  \|\Lambda_{h}^{\delta-\frac{\alpha}{2}}\omega\|_{L^p_x}\leq C \|\omega\|_{L^\infty_x}^{1-\theta_{12}}\|\Lambda_{h}^{\frac{\alpha}{2}}\omega\|_{L^2_x}^{\theta_{12}}
  = C \|\omega\|_{L^\infty_x}^{\frac{2\alpha-2\delta}{\alpha}}\|\Lambda_{h}^{\frac{\alpha}{2}}\omega\|_{L^2_x}^{\frac{2\delta-\alpha}{\alpha}}.
\end{equation}
Consequently, we deduce
\begin{align}\label{eq:U22}
 U_{22} & \leq C \|\omega_0\|_{L^\infty}^{1-\theta_{12}}\|\Lambda_{h}^{\frac{\alpha}{2}}\omega\|_{L^2}^{\theta_{12}}\|\Lambda_{h}^{\delta}u\|_{L^{2}}^{1-\theta_{11}} \|\Lambda_{h}^{\delta+\frac{\alpha}{2}}u\|_{L^{2}}^{1+\theta_{11}} \nonumber\\
 &\leq \frac{\nu}{8} \|\Lambda_{h}^{\delta+\frac{\alpha}{2}}u\|_{L^2}^2 + C\|\omega_0\|_{L^{\infty}}^{\frac{2(1-\theta_{12})}{1-\theta_{11}}}\|\Lambda_{h}^{\frac{\alpha}{2}}\omega\|_{L^{2}}^{\frac{2\theta_{12}}{1-\theta_{11}}}\|\Lambda_{h}^{\delta}u\|_{L^{2}}^{2}.
\end{align}
%where $\theta_1=\tfrac{2-2\delta+\frac{2}{p}}{\alpha}$ and  $\theta_2=\tfrac{2\delta-\alpha}{\alpha}$.	
%\begin{align}\label{eq:U22}
% U_{22} & \leq C \|\omega_0\|_{L^\infty}^{\frac{2\alpha-2\delta}{\alpha}}\|\Lambda_{h}^{\frac{\alpha}{2}}\omega\|_{L^2}^{\frac{2\delta-\alpha}{\alpha}}\|\Lambda_{h}^{\delta}u\|_{L^{2}}^{1-\frac{\alpha-2\delta\alpha+2\delta}{\alpha^2}} \|\Lambda_{h}^{\delta+\frac{\alpha}{2}}u\|_{L^{2}}^{\frac{\alpha-2\delta\alpha+2\delta}{\alpha^2}+1} \nonumber\\
% &\leq \frac{\nu}{8} \|\Lambda_{h}^{\delta+\frac{\alpha}{2}}u\|_{L^2}^2 + C\|\omega_0\|_{L^{\infty}}^{\frac{4\alpha(\alpha-\delta)}{(\alpha-1)(2\delta+\alpha)}}\|\Lambda_{h}^{\frac{\alpha}{2}}\omega\|_{L^{2}}^{\frac{2\alpha(2\delta-\alpha)}{(\alpha-1)(2\delta+\alpha)}}\|\Lambda_{h}^{\delta}u\|_{L^{2}}^{2}.
%\end{align}
Since we only have the a priori control $\|\Lambda_{h}^{\frac{\alpha}{2}}\omega\|_{L^2}\in L^2(0,T)$ from \eqref{eq:omegaL2}, in order to apply Gr\"onwall inequality, we require
\begin{equation}\label{eq:deltabound}
 \frac{2\theta_{12}}{1-\theta_{11}}=\frac{2\alpha(2\delta-\alpha)}{(\alpha-1)(2\delta+\alpha)}\leq 2,\quad\Longleftrightarrow\quad
 \delta\leq \frac{2\alpha^2-\alpha}{2}\triangleq \delta_1.
\end{equation}
Hence, the largest $\delta$ we can choose is $\delta_1$ in \eqref{eq:delta1}.

Inserting the estimates \eqref{eq:U1}, \eqref{eq:U21} and \eqref{eq:U22} into \eqref{eq:udelta}, it follows that
\[
 \frac{\dd}{\dd t}\|\Lambda_{h}^{\delta_1}u\|^2_{L^2} + \nu \|\Lambda_{h}^{\delta_1+\frac{\alpha}{2}}u\|_{L^2}^2
 \leq C \Big(\|\omega_0\|_{L^{\infty}}^{\frac{\alpha}{\alpha-1}} + C\|\omega_0\|_{L^{\infty}}^{\frac{2(1-\theta_{12})}{1-\theta_{11}}}\|\Lambda_{h}^{\frac{\alpha}{2}}\omega\|_{L^{2}}^{\frac{2\theta_{12}}{1-\theta_{11}}}\Big)\|\Lambda_{h}^{\delta_1} u\|_{L^2}^2.
\]
Applying Gr\"onwall inequality and \eqref{eq:omegaL2}, we obtain that for any $\delta\in[\frac\alpha2,\delta_1]$,
\begin{align*}
 \|\Lambda_{h}^{\delta}u(t)\|^2_{L^2} + \nu\int_0^t\|\Lambda_{h}^{\delta+\frac{\alpha}{2}}u(\tau)\|_{L^2}^2 \dd \tau
 &\leq \|\Lambda_{h}^{\delta}u_0\|^2_{L^2}\exp\Big(C\int_0^t \big(1 + \|\Lambda_{h}^{\frac{\alpha}{2}}\omega\|_{L^{2}}^{2}\big) \dd t\Big)\\
 &\leq \|\Lambda_{h}^{\delta}u_0\|^2_{L^2}\exp\big(C(T + 1)\big), 
\end{align*}
where the constant $C$ depends on $\|\omega_0\|_{L^\infty}$. This completes the proof of \eqref{eq:uimprove}.
\end{proof}

The extra regularity that we gain from Lemma \ref{lem:uimprove} allows us to establish an a priori $L^2$-bound of $\partial_{z}\omega$ over a wider range of $\alpha\in[\alpha_0,2)$, where $\alpha_0\approx 1.1108$ is a root of the  cubic equation
\[2\alpha^3+3\alpha^2-4\alpha-2=0.\]
This improvement plays a crucial role in broadening the global regularity assumption from $\alpha \geq \tfrac{6}{5}$ in \cite{abdo2025well}. 

\begin{lemma}\label{lem:omegazimprove}
Let $T>0$ and $\alpha\in [\alpha_{0},\frac{6}{5})$. Then we have
\begin{equation}\label{eq:omegazimprove}
\|\partial_{z}\omega(t)\|^2_{L^2} + \nu \int_0^t\|\Lambda_{h}^{\frac{\alpha}{2}}\partial_{z}\omega(\tau)\|_{L^2}^2  \dd\tau\leq C,\quad \forall\, t\in[0,T],
\end{equation}
where $C$ depends on $\|\partial_z\omega_0\|_{L^2}$, $\|\Lambda_{h}^{\delta_*}u_0\|_{L^2}$, $\|\omega_0\|_{L^\infty}$, and $T$, with
\begin{equation}\label{eq:deltas}
\delta_* \triangleq \max\Big\{\frac\alpha2,\,\frac{-2\alpha^2+2\alpha+1}{\alpha}\Big\}.	
\end{equation}
\end{lemma}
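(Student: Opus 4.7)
The plan is to carry out an $L^2$ energy estimate on the evolution equation for $\partial_z\omega$. Differentiating \eqref{eq:omega} in $z$ and using $\partial_z u=\omega$ together with the incompressibility identity $\partial_z\widetilde{w}=-\partial_x u$, one obtains
\[
\partial_t(\partial_z\omega)+(u\partial_x+\widetilde{w}\partial_z)(\partial_z\omega)+\omega\,\partial_x\omega-\partial_x u\,\partial_z\omega+\nu\Lambda_h^\alpha\partial_z\omega=0.
\]
Taking the $L^2$ inner product with $\partial_z\omega$, the transport commutator cancels by integration by parts using $\partial_x u+\partial_z\widetilde{w}=0$ and $\widetilde{w}|_{z=0,1}=0$, leaving
\[
\tfrac{1}{2}\tfrac{d}{dt}\|\partial_z\omega\|_{L^2}^2+\nu\|\Lambda_h^{\alpha/2}\partial_z\omega\|_{L^2}^2=-\int_\Omega\omega\,\partial_x\omega\,\partial_z\omega\,\dxdz+\int_\Omega\partial_x u\,(\partial_z\omega)^2\,\dxdz\triangleq J_1+J_2.
\]

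For $J_1$, I would integrate by parts in $x$ to write $J_1=\tfrac{1}{2}\int\omega^2\,\partial_x\partial_z\omega\,\dxdz$ and then split the remaining $\partial_x$ fractionally (via Plancherel) to obtain $|J_1|\leq C\|\Lambda_h^{\alpha/2}(\omega^2)\|_{L^2}\|\Lambda_h^{1-\alpha/2}\partial_z\omega\|_{L^2}$. The fractional Leibniz rule \eqref{eq:Leib} and the maximum principle \eqref{eq:omegaMP} bound the first factor by $C\|\omega_0\|_{L^\infty}\|\Lambda_h^{\alpha/2}\omega\|_{L^2}$, while the interpolation \eqref{eq:interp1} (valid since $1-\alpha/2\in[0,\alpha/2]$ for $\alpha\in[1,2]$) bounds the second by $\|\partial_z\omega\|_{L^2}^{(2\alpha-2)/\alpha}\|\Lambda_h^{\alpha/2}\partial_z\omega\|_{L^2}^{(2-\alpha)/\alpha}$. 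Young's inequality then absorbs a fraction of $\nu\|\Lambda_h^{\alpha/2}\partial_z\omega\|_{L^2}^2$, leaving a coefficient of $\|\partial_z\omega\|_{L^2}^2$ that is $L^1_t$-integrable thanks to \eqref{eq:omegaL2}.

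The main difficulty is $J_2$. I would apply H\"older in the $x$-variable with a free parameter $p\in(2,\infty)$ and then in $z$ to obtain
\[
|J_2|\leq C\,\|\partial_xu\|_{L^\infty_zL^p_x}\,\|\partial_z\omega\|_{L^2_zL^{2p/(p-1)}_x}^2.
\]
For the anisotropic factor I would apply a one-dimensional Sobolev embedding in $z$ to $g(z)=\|\partial_x u(\cdot,z)\|_{L^p_x}$, reducing to a combination of $\|\partial_x u\|_{L^2_zL^p_x}$ and $\|\partial_x\omega\|_{L^2_zL^p_x}$, and then use Sobolev embedding in $x$ to replace each by $\|\Lambda_h^{3/2-1/p}u\|_{L^2}$ and $\|\Lambda_h^{3/2-1/p}\omega\|_{L^2}$ respectively. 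The first is controlled by interpolating between $\|\Lambda_h^{\delta_*}u\|_{L^2}$ and the enhanced bound $\|\Lambda_h^{\delta_1+\frac{\alpha}{2}}u\|_{L^2}=\|\Lambda_h^{\alpha^2}u\|_{L^2}$ supplied by Lemma \ref{lem:uimprove}; the threshold $\delta_*$ in \eqref{eq:deltas} is precisely what this interpolation dictates. The second is handled by \eqref{eq:interp2} between $\|\omega\|_{L^\infty}$ and $\|\Lambda_h^{\alpha/2}\partial_z\omega\|_{L^2}$ applied slicewise, while the $\partial_z\omega$ factor is treated by Sobolev in $x$ followed by interpolation \eqref{eq:interp1} between $\|\partial_z\omega\|_{L^2}$ and $\|\Lambda_h^{\alpha/2}\partial_z\omega\|_{L^2}$.

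The hardest step will be tuning $p$ so that, after Young's inequality, the exponent of $\|\Lambda_h^{\alpha/2}\partial_z\omega\|_{L^2}^2$ on the right-hand side is strictly less than $2$ (for absorption) and the leftover prefactor of $\|\partial_z\omega\|_{L^2}^2$ is $L^1_t$-integrable using only the a priori controls \eqref{eq:omegaL2} and \eqref{eq:uimprove}. Tracking the algebraic constraints on the exponents, the borderline case reduces precisely to the cubic $2\alpha^3+3\alpha^2-4\alpha-2=0$, whose relevant root in $(1,\tfrac{6}{5})$ is $\alpha_0\approx 1.1108$. For $\alpha\geq\alpha_0$, Gr\"onwall's inequality closes the estimate and yields \eqref{eq:omegazimprove}.
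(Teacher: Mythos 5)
Your setup, the derivation of the $\partial_z\omega$ equation, the identity $\tfrac12\tfrac{d}{dt}\|\partial_z\omega\|_{L^2}^2+\nu\|\Lambda_h^{\alpha/2}\partial_z\omega\|_{L^2}^2=J_1+J_2$, and your treatment of $J_1$ all match the paper. The gap is in $J_2$. You apply H\"older first, isolating $\|\partial_x u\|_{L^\infty_z L^p_x}$, and then use an Agmon-type interpolation in $z$, which produces the factor $\|\partial_x\omega\|_{L^2_z L^p_x}$ (equivalently, after Sobolev embedding, $\|\Lambda_h^{3/2-1/p}\omega\|_{L^2}$). This quantity is \emph{not} controllable by the a priori information available at this stage: the only bounds on $\omega$ are $\|\omega\|_{L^\infty}$ and $\Lambda_h^{\alpha/2}\omega\in L^2_tL^2_{x,z}$ (Lemma \ref{lem:omegaimprove} comes \emph{after} this lemma and cannot be invoked), and since $3/2-1/p\geq 1>\alpha/2$ for $p>2$ and $\alpha<6/5$, no interpolation of the form \eqref{eq:interp2} can reach a full horizontal derivative of $\omega$ in $L^p_x$ from these two quantities (the required interpolation exponent would exceed $1$). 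Your suggestion to interpolate between $\|\omega\|_{L^\infty}$ and $\|\Lambda_h^{\alpha/2}\partial_z\omega\|_{L^2}$ does not apply either: \eqref{eq:interp2} interpolates derivatives of a single function, and $\Lambda_h^{\alpha/2}\partial_z\omega$ carries a vertical derivative that $\Lambda_h^{3/2-1/p}\omega$ does not.

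The paper's proof avoids this by reversing the order of operations: writing $\partial_x u=\Lambda_h Hu$, it first transfers $\Lambda_h^{\alpha/2}$ onto $(\partial_z\omega)^2$ by duality in $x$, so that only $\Lambda_h^{1-\alpha/2}Hu$ remains; \emph{then} it applies the $L^\infty_z$ interpolation $\|f\|_{L^\infty_z}\leq C\|f\|_{L^2_z}^{1/2}\|\partial_z f\|_{L^2_z}^{1/2}$. The $z$-derivative factor this produces is $\Lambda_h^{1-\alpha/2}H\omega$, and since $1-\alpha/2\leq\alpha/2$ for $\alpha\geq1$, it \emph{can} be interpolated via \eqref{eq:interp2} between $\|\omega\|_{L^\infty_x}$ and $\|\Lambda_h^{\alpha/2}\omega\|_{L^2_x}$. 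The remaining factors are $\|\Lambda_h^{1-\alpha/2}u\|_{L^{p_1}_xL^2_z}\lesssim\|\Lambda_h^\delta u\|_{L^2}$ (controlled by Lemma \ref{lem:uimprove}) and $\|\Lambda_h^{\alpha/2}((\partial_z\omega)^2)\|_{L^{p_3}_xL^1_z}$, handled by the fractional Leibniz rule. The exponent bookkeeping you describe at the end (the constraint $\delta\geq\delta_*$ and the cubic defining $\alpha_0$) is correct in spirit, but it only emerges from this corrected decomposition; as written, your estimate of $J_2$ cannot be closed.
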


\begin{proof}
Differentiating \eqref{eq:omega} in $z$ yields
\[
\partial_t \partial_{z}\omega + (u\partial_{x}+\widetilde{w}\partial_{z})\partial_{z}\omega + \nu\Lambda_{h}^{\alpha}\partial_{z}\omega=\partial_{x}u\partial_{z}\omega
-\omega\partial_{x}\omega.
\]
Taking $L^2$ inner product with $\partial_{z}\omega$, we have
\begin{equation}\label{eq:omegazL2}
 \frac{1}{2}\frac{\dd}{\dd t} \|\partial_{z}\omega\|^2_{L^2} +\nu \|\Lambda_{h}^{\frac{\alpha}{2}}\partial_{z}\omega\|_{L^2}^2 
 = - \int_{\Omega}\omega\partial_{x}\omega \partial_{z}\omega \dxdz+\int_{\Omega}\partial_{x}u (\partial_{z}\omega)^2 \dxdz \triangleq J_{1}+J_{2}.
\end{equation}

For $J_1$, we apply fractional Leibniz rule \eqref{eq:Leib}, \eqref{eq:omegaMP}, Poincar\'e inequality, and Young's inequality to get
\begin{align}\label{eq:J1}
J_{1} & = - \frac{1}{2}\int_{\Omega} \partial_{x}(\omega^{2}) \partial_{z}\omega \dxdz
\leq C\|\Lambda_{h}^{1-\frac{\alpha}{2}}(\omega^2)\|_{L^2} \|\Lambda_{h}^{\frac{\alpha}{2}}\partial_{z}\omega\|_{L^2}
\leq C\|\omega\|_{L^{\infty}}\|\Lambda_{h}^{1-\frac{\alpha}{2}}\omega\|_{L^2} \|\Lambda_{h}^{\frac{\alpha}{2}}\partial_{z}\omega\|_{L^2} \nonumber\\
&\leq C\|\omega_0\|_{L^{\infty}} \|\Lambda_{h}^{\frac{\alpha}{2}}\omega\|_{L^2} \|\Lambda_{h}^{\frac{\alpha}{2}}\partial_{z}\omega\|_{L^2}
\leq \frac{\nu}{4} \|\Lambda_{h}^{\frac{\alpha}{2}}\partial_{z}\omega\|_{L^2}^2
+C\|\omega_0\|_{L^{\infty}}^{2}\|\Lambda_{h}^{\frac{\alpha}{2}}\omega\|_{L^2}
^{2}.
\end{align}

For $J_2$, we use interpolation in $z$ and H\"older inequality in $x$ to obtain
\begin{align}\label{eq:J2-1}
J_{2}&\leq C\int_{\mathbb{T}}\|\Lambda_{h}^{1-\frac{\alpha}{2}}Hu\|_{L_{z}^{\infty}} \|\Lambda_{h}^{\frac{\alpha}{2}}\big( (\partial_{z}\omega)^2\big)\|_{L_{z}^{1}}\dd x
\nonumber\\
&\leq C\int_{\mathbb{T}}\|\Lambda_{h}^{1-\frac{\alpha}{2}}Hu\|_{L_{z}^{2}}^{\frac{1}{2}} \|\Lambda_{h}^{1-\frac{\alpha}{2}}H\partial_{z}u\|_{L_{z}^{2}}^{\frac{1}{2}} \|\Lambda_{h}^{\frac{\alpha}{2}}\big( (\partial_{z}\omega)^2\big)\|_{L_{z}^{1}} \dd x
\nonumber\\
&\leq C\int_{\mathbb{T}}\|\Lambda_{h}^{1-\frac{\alpha}{2}}Hu\|_{L_{z}^{2}}^{\frac{1}{2}} \|\Lambda_{h}^{1-\frac{\alpha}{2}}H\partial_{z}u\|_{L_{z}^{2}}^{\frac{1}{2}} \|\Lambda_{h}^{\frac{\alpha}{2}}\big( (\partial_{z}\omega)^2\big)\|_{L_{z}^{1}} \dd x
\nonumber\\
&\leq
 C \Big\|\|\Lambda_{h}^{1-\frac{\alpha}{2}}Hu\|_{L_{z}^{2}}^{\frac{1}{2}}\Big\|_{L_{x}^{2p_{1}}} \Big\|\|\Lambda_{h}^{1-\frac{\alpha}{2}}H\partial_{z}u\|_{L_{z}^{2}}^{\frac{1}{2}}\Big\|_{L_{x}^{2p_{2}}} \Big\|\|\Lambda_{h}^{\frac{\alpha}{2}}\big( (\partial_{z}\omega)^2\big)\|_{L_{z}^{1}}\Big\|_{L_{x}^{p_3}} \nonumber\\
& \leq C \|\Lambda_{h}^{1-\frac{\alpha}{2}}u\|_{L^{p_1}_x L^2_z}^{\frac{1}{2}} \|\Lambda_{h}^{1-\frac{\alpha}{2}}\omega\|_{L^{p_2}_x L^2_z}^{\frac{1}{2}} \|\Lambda_{h}^{\frac{\alpha}{2}}\big( (\partial_{z}\omega)^2\big)\|_{L^{p_3}_x L^1_z},
\end{align}
where $H=-\partial_{x}\Lambda_{h}^{-1}$ is the Hilbert transform in $x$-variable and the parameters $(p_1,p_2,p_3)$ are required to satisfy $\frac{1}{2p_1} + \frac{1}{2p_2} + \frac{1}{p_3}=1$. We proceed with the following choices of these parameters:
\[
 \frac{1}{p_1}=\frac{3-\alpha}{2}-\delta, \quad %= \frac32-\alpha^2,\quad 
 \frac{1}{p_2}=\frac{2-\alpha}{2\alpha}, \quad\text{and}\quad
 \frac{1}{p_3}=\frac{2\delta\alpha+\alpha^{2}+2\alpha-2}{4\alpha}, %=\frac{\alpha^3+\alpha-1}{2\alpha},
\]
with $\delta\in[\frac\alpha2, \delta_1]$. 
Note that for $\alpha\in(1,\frac65)$, we have $p_1\in(2,\infty)$, $p_2\in(2,\infty)$ and $p_3\in(1,2)$.

Now, we interpolate the three terms in \eqref{eq:J2-1} one by one. 

For $\|\Lambda_{h}^{1-\frac{\alpha}{2}}u\|_{L^{p_1}_x L^2_z}$, we apply Minkowski inequality and Sobolev embedding to get
\begin{align*}
 \|\Lambda_{h}^{1-\frac{\alpha}{2}}u\|_{L^{p_1}_x L^2_z} \leq C \|\Lambda_{h}^{1-\frac{\alpha}{2}}u\|_{L^2_zL^{p_1}_x} \leq C\|\Lambda_{h}^{\delta}u\|_{L^2}\leq C.
\end{align*}
Taking advantage of the enhanced estimate \eqref{eq:uimprove} in Lemma \ref{lem:uimprove}, we know that this term is a priori bounded. Our choice of $p_1$ is based on this argument.

For $\|\Lambda_{h}^{1-\frac{\alpha}{2}}\omega\|_{L^{p_2}_x L^2_z}$, the same interpolation as \eqref{eq:omegaenhance} (replacing $\delta$ by $1$) yields
\begin{equation}\label{eq:omegaomegaz}
  \|\Lambda_{h}^{1-\frac{\alpha}{2}}\omega\|_{L^{p_2}_x}\leq C \|\omega\|_{L^\infty_x}^{1-\theta_{32}}\|\Lambda_{h}^{\frac{\alpha}{2}}\omega\|_{L^2_x}^{\theta_{32}} 
  = C \|\omega\|_{L^\infty_x}^{\frac{2\alpha-2}{\alpha}}\|\Lambda_{h}^{\frac{\alpha}{2}}\omega\|_{L^2_x}^{\frac{2-\alpha}{\alpha}},
\end{equation}
where $p_2$ is chosen such that
\[
  \theta_{32}=\frac{2}{p_2} = \frac{1-\frac\alpha2}{\frac\alpha2}.
\]
Thus,
\[
 \|\Lambda_{h}^{1-\frac{\alpha}{2}}\omega\|_{L^{p_2}_x L^2_z}\leq C \|\Lambda_{h}^{1-\frac{\alpha}{2}}\omega\|_{L^2_zL^{p_2}_x} \leq C \|\omega\|_{L^2_xL^\infty_x}^{\frac{2\alpha-2}{\alpha}}\|\Lambda_{h}^{\frac{\alpha}{2}}\omega\|_{L^2}^{\frac{2-\alpha}{\alpha}} \leq C \|\omega_0\|_{L^\infty}^{\frac{2\alpha-2}{\alpha}}\|\Lambda_{h}^{\frac{\alpha}{2}}\omega\|_{L^2}^{\frac{2-\alpha}{\alpha}}.
\]

Finally, for $\|\Lambda_{h}^{\frac{\alpha}{2}}\big( (\partial_{z}\omega)^2\big)\|_{L^{p_3}_x L^1_z}$, using fractional Leibniz rule \eqref{eq:Leib} and interpolation, it yields
\begin{align*}
 \|\Lambda_{h}^{\frac{\alpha}{2}}\big( (\partial_{z}\omega)^2\big)\|_{L^{p_3}_x}
 & \leq C\|\partial_z\omega\|_{L^{\frac{2p_3}{2-p_3}}_x}\|\Lambda_{h}^{\frac{\alpha}{2}}\partial_z\omega\|_{L^2_x}
 = C\|\partial_z\omega\|_{L^{\frac{4\alpha}{2\delta\alpha+\alpha^2-2}}_x}\|\Lambda_{h}^{\frac{\alpha}{2}}\partial_z\omega\|_{L^2_x}\\
 &\leq C\|\partial_z\omega\|_{L^2_x}^{1-\theta_{33}} \|\Lambda_{h}^{\frac{\alpha}{2}}\partial_z\omega\|_{L^2_x}^{1+\theta_{33}},
 \quad\text{where}\quad \theta_{33} = \frac{-2\delta\alpha-\alpha^2+2\alpha+2}{2\alpha^2}.
\end{align*}
Consequently, we have
\begin{align*}
 \|\Lambda_{h}^{\frac{\alpha}{2}}\big( (\partial_{z}\omega)^2\big)\|_{L^{p_3}_x L^1_z}
 & \leq C\|\Lambda_{h}^{\frac{\alpha}{2}}\big( (\partial_{z}\omega)^2\big)\|_{L^1_z L^{p_3}_x}
 \leq C\|\partial_z\omega\|_{L^2}^{1-\theta_{33}} \|\Lambda_{h}^{\frac{\alpha}{2}}\partial_z\omega\|_{L^2}^{1+\theta_{33}}.	
\end{align*}
Collecting all the estimates, we deduce that
\begin{align}\label{eq:J2}
J_2 &\leq C \|\Lambda_{h}^{\delta}u\|_{L^2}^{\frac12} \|\omega_0\|_{L^\infty}^{\frac{1-\theta_{32}}{2}}\|\Lambda_{h}^{\frac{\alpha}{2}}\omega\|_{L^2}^{\frac{\theta_{32}}{2}} \|\partial_z\omega\|_{L^2}^{1-\theta_{33}} \|\Lambda_{h}^{\frac{\alpha}{2}}\partial_z\omega\|_{L^2}^{1+\theta_{33}} \nonumber\\
%& \leq \frac{\nu}{4} \|\Lambda_{h}^{\frac{\alpha}{2}}\partial_{z}\omega\|_{L^2}^2
%+C\|\omega_0\|_{L^{\infty}}^{\frac{\alpha(2\alpha-2)}{(\alpha+1)^2(\alpha-1)}}\|\Lambda_{h}^{\delta}u\|_{L^2}^{\frac{\alpha^2}{(\alpha+1)^2(\alpha-1)}}\|\Lambda_{h}^{\frac{\alpha}{2}}\omega\|_{L^2}^{\frac{\alpha(2-\alpha)}{(\alpha+1)^2(\alpha-1)}} \|\partial_z\omega\|_{L^2}^{2}\nonumber\\
& \leq \frac{\nu}{4} \|\Lambda_{h}^{\frac{\alpha}{2}}\partial_{z}\omega\|_{L^2}^2
+C\|\omega_0\|_{L^\infty}^{\frac{1-\theta_{32}}{1-\theta_{33}}}\|\Lambda_{h}^{\delta}u\|_{L^2}^{\frac{1}{1-\theta_{33}}}\|\Lambda_{h}^{\frac{\alpha}{2}}\omega\|_{L^2}^{\frac{\theta_{32}}{1-\theta_{33}}} \|\partial_z\omega\|_{L^2}^{2}.	
\end{align}
Note that $\|\Lambda_{h}^{\delta}u\|_{L^2}\in L^\infty(0,T)$ is a priori bounded by \eqref{eq:uimprove}. Therefore, we may absorb this term (as well as $\|\omega_0\|_{L^\infty}$) to the constant $C$.

Inserting the estimates \eqref{eq:J1} and \eqref{eq:J2} into \eqref{eq:omegazL2}, it follows that
\[
 \frac{\dd}{\dd t} \|\partial_{z}\omega\|^2_{L^2} +\nu \|\Lambda_{h}^{\frac{\alpha}{2}}\partial_{z}\omega\|_{L^2}^2
 \leq C\|\Lambda_{h}^{\frac{\alpha}{2}}\omega\|_{L^2}^{2} + C\|\Lambda_{h}^{\frac{\alpha}{2}}\omega\|_{L^2}^{\frac{\theta_{32}}{1-\theta_{33}}} \|\partial_z\omega\|_{L^2}^{2},
\]
where $C$ depends on $\|\Lambda_{h}^{\delta}u_0\|_{L^2}$, $\|\omega_0\|_{L^\infty}$, and $T$.

Since we only have the a priori control $\|\Lambda_{h}^{\frac{\alpha}{2}}\omega\|_{L^2}\in L^2(0,T)$ from \eqref{eq:omegaL2}, the application of Gr\"onwall inequality necessitates
\begin{equation}\label{eq:deltalow}
 \frac{\theta_{32}}{1-\theta_{33}} = \frac{2\alpha(2-\alpha)}{2\delta\alpha+3\alpha^2-2\alpha-2}\leq 2,\quad\Longleftrightarrow\quad
 \delta\geq \frac{-2\alpha^2+2\alpha+1}{\alpha}.
\end{equation}
Hence, the a priori bound on $\|\Lambda_{h}^{\delta_*}u\|_{L^2}$ is the minimal requirement, where $\delta_*$ is defined in \eqref{eq:deltas}. The bound can be obtained by applying Lemma \ref{lem:uimprove}, provided that $\delta_* \leq \delta_1$, i.e.
\[
  \frac{-2\alpha^2+2\alpha+1}{\alpha}\leq \frac{2\alpha^2-\alpha}{2},\quad\Longleftrightarrow\quad
  2\alpha^3+3\alpha^2-4\alpha-2\geq 0.
\]
The inequality holds when $\alpha\in[\alpha_0, \frac65)$. 

Then, Gr\"onwall inequality implies
\begin{align*}
 \|\partial_{z}\omega(t)\|^2_{L^2} + \nu \int_0^t\|\Lambda_{h}^{\frac{\alpha}{2}}\partial_{z}\omega(\tau)\|_{L^2}^2  d\tau
 & \leq\big(\|\partial_{z}\omega_0\|^2_{L^2}+1\big)\exp\Big(C\int_0^t\big(1 + \|\Lambda_{h}^{\frac{\alpha}{2}}\omega\|_{L^2}^{2}\big)\dd t\Big)\\
 & \leq\big(\|\partial_{z}\omega_0\|^2_{L^2}+1\big)\exp\Big(C\big(T+\|\omega_0\|_{L^2}^2\big)\Big).
\end{align*}
The desired estimate (\ref{eq:omegazimprove}) follows immediately.
\end{proof}

\vskip .1in
With (\ref{eq:omegazimprove}) in hand, we establish the horizontal derivative estimation of $\omega$.
\begin{lemma}\label{lem:omegaimprove}
Let $T>0$ and $\alpha\in [\alpha_{0},\frac{6}{5})$. Then we have
\begin{equation}\label{eq:omegaimprove}
\|\Lambda_{h}^{\rho}\omega(t)\|_{L^{2}}^{2}
+\nu\int_{0}^{t}{\|\Lambda_{h}^{\rho+\frac{\alpha}{2}}\omega(\tau)\|_{L^{2}} ^{2} \,d\tau}\leq
C, \quad\forall\, t\in[0,T],
\end{equation}
for any $\rho\in[0,\rho_1]$, where
\begin{equation}\label{eq:rho1}
\rho_1 = \min\Big\{\frac{2\alpha^2-\alpha}{8-4\alpha},\,\frac{2\alpha^2+\alpha-2}{4}\Big\},	
\end{equation}
and $C$ depends on $\|\Lambda_{h}^{\delta_1}u_0\|_{L^2}$, $\|\Lambda_{h}^{\delta_*}u_0\|_{L^2}$, $\|\Lambda_{h}^{\rho}\omega_0\|_{L^{2}}$, $\|\partial_z\omega_0\|_{L^{2}}$, $\|\omega_0\|_{L^\infty}$ and $T$.
\end{lemma}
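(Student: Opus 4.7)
\medskip

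The plan is to perform an $L^2$-based energy estimate at regularity level $\rho$ directly on the vorticity equation \eqref{eq:omega}, mirroring the structure of Lemma \ref{lem:uimprove} but now for $\omega$. Taking the $L^2$ inner product of \eqref{eq:omega} with $\Lambda_h^{2\rho}\omega$ and integrating by parts in the dissipation, one obtains
\[
\frac12\frac{\dd}{\dd t}\|\Lambda_h^{\rho}\omega\|_{L^2}^2 + \nu\|\Lambda_h^{\rho+\frac{\alpha}{2}}\omega\|_{L^2}^2 = K_1+K_2,
\]
where $K_1:= -\int_\Omega u\partial_x\omega\,\Lambda_h^{2\rho}\omega\,\dxdz$ and $K_2:= -\int_\Omega \widetilde{w}\partial_z\omega\,\Lambda_h^{2\rho}\omega\,\dxdz$. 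The task is to bound $K_1,K_2$ by a product of $\frac{\nu}{4}\|\Lambda_h^{\rho+\alpha/2}\omega\|_{L^2}^2$ and a Gr\"onwall-type forcing whose time-integral is finite.

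For $K_1$ I would proceed exactly as in the treatment of $U_1$ in Lemma \ref{lem:uimprove}: shift $\Lambda_h^{\rho-\alpha/2}$ onto the nonlinearity by duality, apply the fractional Leibniz rule \eqref{eq:Leib}, and split into a piece controlled by $\|u\|_{L^\infty}$ (bounded via \eqref{eq:uLinf} and \eqref{eq:omegaMP}) paired with $\|\Lambda_h^{\rho+1-\alpha/2}\omega\|_{L^2}$, and a piece where some derivatives fall on $u$ and the $L^p_x$-norm of a fractional derivative of $\omega$ is interpolated between $\|\omega\|_{L^\infty_x}$ and $\|\Lambda_h^{\alpha/2}\omega\|_{L^2_x}$ via \eqref{eq:interp2}, exactly in the spirit of \eqref{eq:omegaenhance}. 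This generates a forcing term of the form $C\|\omega_0\|_{L^\infty}^{\ast}\|\Lambda_h^{\alpha/2}\omega\|_{L^2}^{\beta_1}\|\Lambda_h^\rho\omega\|_{L^2}^2$ where $\beta_1=\beta_1(\rho,\alpha)$. Requiring $\beta_1\le 2$ (so that the coefficient lies in $L^1(0,T)$ thanks to \eqref{eq:omegaL2}) will produce the first constraint $\rho\le \frac{2\alpha^2+\alpha-2}{4}$ in \eqref{eq:rho1}.

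For $K_2$ I would imitate the structure of $J_2$ in Lemma \ref{lem:omegazimprove}. The vertical velocity $\widetilde{w}$ is dominated via the anisotropic estimate \eqref{eq:wAniso} by a mixed norm of $\partial_x u$, which one in turn controls through the enhanced bound $\|\Lambda_h^{\delta_1}u\|_{L^2}\in L^\infty(0,T)$ from Lemma \ref{lem:uimprove}. The factor $\partial_z\omega$ is handled by interpolation between $\|\partial_z\omega\|_{L^2}$ (now a priori bounded in $L^\infty(0,T)$ by Lemma \ref{lem:omegazimprove}) and $\|\Lambda_h^{\alpha/2}\partial_z\omega\|_{L^2}$ (only in $L^2(0,T)$). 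Applying H\"older and Young, together with a fractional Leibniz on the quadratic factor in $\partial_z\omega$, yields a forcing of the form $C\|\Lambda_h^{\alpha/2}\partial_z\omega\|_{L^2}^{\beta_2}\|\Lambda_h^\rho\omega\|_{L^2}^2$, and the analogous condition $\beta_2\le 2$ produces the second constraint $\rho\le \frac{2\alpha^2-\alpha}{8-4\alpha}$.

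Taking the minimum of the two thresholds defines $\rho_1$ as in \eqref{eq:rho1}, and Gr\"onwall closes the estimate. The main obstacle I anticipate is the bookkeeping of the anisotropic Lebesgue exponents in $K_2$: one must tune $(p_1,p_2,p_3)$ simultaneously so that (i) the $u$-factor is absorbed into the finite bound provided by Lemma \ref{lem:uimprove}, (ii) the $\omega$-factor is interpolated via \eqref{eq:interp2} against $\|\omega\|_{L^\infty}$ and $\|\Lambda_h^{\alpha/2}\omega\|_{L^2}$, and (iii) the $\partial_z\omega$-factor produces the sharpest permissible exponent on $\|\Lambda_h^{\alpha/2}\partial_z\omega\|_{L^2}$. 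Balancing these three simultaneously to yield precisely the bound $\rho_1$ in \eqref{eq:rho1}, while remaining within the admissible range $\alpha\in[\alpha_0,\frac65)$ (which is what ensures Lemmas \ref{lem:uimprove} and \ref{lem:omegazimprove} are applicable), is the delicate computational step.
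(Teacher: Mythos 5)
Your overall framework (energy estimate at level $\rho$, Gr\"onwall, with Lemmas \ref{lem:uimprove} and \ref{lem:omegazimprove} as inputs and $\rho_1$ emerging as the minimum of two constraints) is the right one, but the mechanism you propose for producing the two constraints is not the paper's and, as described, would not reach the stated $\rho_1$. The decisive structural move you are missing is in the vertical transport term: the paper first uses incompressibility to write $u\partial_x\omega+\widetilde{w}\partial_z\omega=\partial_x(u\omega)+\partial_z(\widetilde{w}\omega)$ and then \emph{integrates by parts in $z$}, so that
\[
N_2=\int_{\Omega}\Lambda_{h}^{2\rho-\frac{\alpha}{2}}(\widetilde{w}\omega)\,\Lambda_{h}^{\frac{\alpha}{2}}\partial_{z}\omega\,\dxdz .
\]
The pairing partner is thus $\|\Lambda_{h}^{\alpha/2}\partial_z\omega\|_{L^2}$, which Lemma \ref{lem:omegazimprove} puts in $L^2(0,T)$ and which enters only \emph{linearly} (squared after Young); there is no condition ``$\beta_2\le 2$'' on its power. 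All $2\rho-\tfrac\alpha2$ horizontal derivatives land on $\widetilde w\omega$, where $\omega$ is interpolated between $\|\omega\|_{L^\infty}$ and the dissipation $\|\Lambda_h^{\rho+\alpha/2}\omega\|_{L^2}$ (not $\|\Lambda_h^{\alpha/2}\omega\|_{L^2}$), and $\widetilde w$ costs $\|\Lambda_h^{2\rho-\alpha/2+1}u\|_{L^2}$ or $\|\Lambda_h^{\delta_1+\alpha/2}u\|_{L^2}^{\theta_{21}}$. \emph{Both} constraints in \eqref{eq:rho1} come from this term and are governed by the available $u$-regularity $\delta_1$: the bound $\rho\le\frac{2\alpha^2+\alpha-2}{4}=\frac{\delta_1+\alpha-1}{2}$ is the requirement $2\rho-\tfrac\alpha2+1\le\delta_1+\tfrac\alpha2$, and the bound $\rho\le\frac{2\alpha^2-\alpha}{8-4\alpha}$ is the Young-exponent condition $\frac{2\theta_{21}}{1-\theta_{22}}\le2$ on $\|\Lambda_h^{\delta_1+\alpha/2}u\|_{L^2}$. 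Your attribution of the first constraint to the horizontal term and of the second to the power of $\|\Lambda_h^{\alpha/2}\partial_z\omega\|$ is therefore incorrect, and your reference to ``a fractional Leibniz on the quadratic factor in $\partial_z\omega$'' imports the structure of $J_2$ from Lemma \ref{lem:omegazimprove}, which is not present in $K_2$.

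There is a second, quantitative gap in your treatment of $K_1$: interpolating the $L^p_x$-norm of $\Lambda_h^{\rho+1-\alpha/2}\omega$ between $\|\omega\|_{L^\infty_x}$ and $\|\Lambda_h^{\alpha/2}\omega\|_{L^2_x}$ via \eqref{eq:interp2} requires $\rho+1-\tfrac\alpha2<\tfrac\alpha2$, i.e.\ $\rho<\alpha-1<0.2$, far below $\rho_1\approx 0.38$ (and below the target $\rho^*=\frac{3-2\alpha}{2}$). In the paper, $N_1$ is instead handled by pairing $\|u\|_{L^\infty}$ with $\|\Lambda_h^{\rho+1-\alpha/2}\omega\|_{L^2}$ interpolated between $\|\Lambda_h^{\rho}\omega\|_{L^2}$ and the dissipation $\|\Lambda_h^{\rho+\alpha/2}\omega\|_{L^2}$, and $\|\omega\|_{L^\infty}$ with $\|\Lambda_h^{\rho+1-\alpha/2}u\|_{L^2}\lesssim\|\Lambda_h^{\alpha}u\|_{L^2}\in L^2(0,T)$; this contributes only a mild condition on $\rho$ and no binding constraint. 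To repair your argument you would need to (i) perform the $z$-integration by parts in $N_2$ so that $\|\Lambda_h^{\alpha/2}\partial_z\omega\|_{L^2}$ appears as the external pairing factor, and (ii) systematically interpolate $\omega$ against the dissipation at level $\rho+\tfrac\alpha2$ rather than against $\|\Lambda_h^{\alpha/2}\omega\|_{L^2}$.
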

\begin{proof}
Taking the $L^2$ inner product of \eqref{eq:omega} with  $\Lambda_{h}^{2\rho}\omega$, it implies
\begin{align}\label{eq:omegarho}
&\frac{1}{2}\frac{\dd}{\dd t}\|\Lambda_{h}^{\rho}\omega(t)\|_{L^{2}}^{2}
+\nu\|\Lambda_{h}^{\rho+\frac{\alpha}{2}}\omega\|_{L^{2}}^{2} =
-\int_{\Omega}\big(u\partial_{x}\omega + \widetilde{w}\partial_{z}\omega\big)\Lambda_{h}^{2\rho}\omega \dxdz \nonumber\\ 
& = -\int_{\Omega}\Big(\partial_{x}(u \omega) + \partial_{z}(\widetilde{w}\omega)\Big) \Lambda_{h}^{2\rho}\omega\, \dxdz
\triangleq N_{1}+N_{2}.
\end{align}
Note that in the second equality, we have used the incompressibility condition \eqref{eq:main}$_2$ so
\[
 \partial_{x}(u \omega) + \partial_{z}(\widetilde{w}\omega) 
 = u\partial_{x}\omega + \widetilde{w}\partial_{z}\omega + (\partial_x u + \partial_z\widetilde{w})\omega
 = u\partial_{x}\omega + \widetilde{w}\partial_{z}\omega.
\]

%For $N_1$, by integration by parts in $x$, we have
%\[
%N_1 = \int_{\Omega}\partial_{x}u\,\omega\, \Lambda_{h}^{2\rho}\omega\, \dxdz + \int_{\Omega}u\,\omega\, \Lambda_{h}^{2\rho} \, \partial_{x}\omega \, \dxdz \triangleq N_{11}+N_{12}.
%\]
%As for $N_{11}$, since $\rho\leq \frac{\alpha}{2}$, we apply Poincar\'e inequality and obtain 
%\begin{align}\label{eq:N11}
%N_{11}&\leq C\|\omega\|_{L^{\infty}}\|\partial_{x}u\|_{L^{2}}\|\Lambda_{h}^{2\rho}\omega \|_{L^{2}} 
%\leq C\|\omega_0\|_{L^{\infty}}\|\partial_{x}u\|_{L^{2}}\|\Lambda_{h}^{\rho+\frac\alpha2}\omega \|_{L^{2}}\nonumber\\
%&\leq \frac{\nu}{8}\|\Lambda_{h}^{\rho+\frac{\alpha}{2}}\omega\|_{L^{2}}^{2}
%+C\|\omega\|_{L^{\infty}}^2\|\Lambda_{h}^{\alpha}u\|_{L^{2}}^{2}.
%\end{align}
For $N_1$, using fractional Leibniz rule, \eqref{eq:omegaMP}, \eqref{eq:uLinf}, interpolation \eqref{eq:interp1}, and Young's inequality, the term $N_{12}$ can be estimated as
 \begin{align}\label{eq:N1}
N_1 & = \int_{\Omega}\Lambda_{h}^{\rho-\frac{\alpha}{2}}\partial_{x}(u\omega)\, \Lambda_{h}^{\rho+\frac{\alpha}{2}}\omega\, \dxdz
\leq C\|\Lambda_{h}^{\rho-\frac{\alpha}{2}+1}(u\omega)\|_{L^{2}} \|\Lambda_{h}^{\rho+\frac{\alpha}{2}}\omega\|_{L^{2}} \nonumber\\
&\leq C\big(\|\omega\|_{L^{\infty}} \|\Lambda_{h}^{\rho-\frac{\alpha}{2}+1}u\|_{L^{2}} + \|u\|_{L^{\infty}} \|\Lambda_{h}^{\rho-\frac{\alpha}{2}+1}\omega\|_{L^{2}}\big) \|\Lambda_{h}^{\rho+\frac{\alpha}{2}} \omega\|_{L^{2}} \nonumber\\
&\leq C\|\omega_0\|_{L^{\infty}} \big( \|\Lambda_{h}^{\alpha}u\|_{L^{2}}+ \|\Lambda_{h}^{\rho}\omega\|_{L^{2}}^{\frac{2\alpha-2}{\alpha}} \|\Lambda_{h}^{\rho+\frac\alpha2}\omega\|_{L^{2}}^{\frac{2-\alpha}\alpha}\big) \|\Lambda_{h}^{\rho+\frac{\alpha}{2}} \omega\|_{L^{2}} \nonumber\\
&\leq \frac{\nu}{4}\|\Lambda_{h}^{\rho+\frac{\alpha}{2}} \omega\|_{L^{2}}^2 + C\big(\|\Lambda_{h}^{\alpha} u \|_{L^{2}}^{2} + \|\Lambda_{h}^{\rho}\omega\|_{L^{2}}^{2}\big),
\end{align}
where we absorb the $\|\omega_0\|_{L^{\infty}}$ dependence to the constant $C$. Note that in the penultimate inequality, we have applied Poincar\'e inequality to obtain $\|\Lambda_{h}^{\rho+1-\frac{\alpha}{2}}u\|_{L^{2}}\leq C\|\Lambda_{h}^{\alpha}u\|_{L^{2}}$, which only requires a mild assumption $
	\rho+1-\frac\alpha2\leq\alpha$, which is valid as long as \eqref{eq:rhocond1} holds. 
%which can be easily verified when $\rho\leq\rho_1$. In fact, \eqref{eq:mild} holds for any $\rho \leq \rho^* = \tfrac{3 - 2\alpha}{2}$. This ensures the validity of the estimate \eqref{eq:N1} throughout the remainder of the paper whenever Lemma \ref{lem:omegaimprove} is invoked.
%Collecting the estimates on $N_{21}$ and $N_{22}$, we deduce
%\begin{equation}\label{eq:N1}
%N_1 \leq
%\frac{\nu}{4}\|\Lambda_{h}^{\rho+\frac{\alpha}{2}} \omega\|_{L^{2}}^2
%+C\big(\|\Lambda_{h}^{\alpha} u \|_{L^{2}}^{2} + \|\Lambda_{h}^{\rho}\omega\|_{L^{2}}^{2}\big).	
%\end{equation}

The treatment of the term $N_{2}$ is more delicate, relying essentially on the enhanced energy bound for $\|\Lambda_{h}^{\delta} u\|_{L^2}$ and the a priori estimate \eqref{eq:omegaMP} on $\|\omega\|_{L^\infty}$. 
%Starting with integration by parts in $z$, we have
%\[
%N_{2} = \int_{\Omega}\partial_{z}\widetilde{w}\omega \Lambda_{h}^{2\rho}\omega dxdz+\int_{\Omega}\widetilde{w}\omega \Lambda_{h}^{2\rho}\partial_{z}\omega dxdz=-\int_{\Omega}\partial_{x}u\omega \Lambda_{h}^{2\rho}\omega dxdz+\int_{\Omega}\widetilde{w}\omega \Lambda_{h}^{2\rho}\partial_{z}\omega dxdz\triangleq N_{11}+N_{12}.
%\]
%The term $N_{21}$ is the same as $N_{11}$, it gives
%\[
%N_{21} \leq \frac{\nu}{8}\|\Lambda_{h}^{\rho+\frac{\alpha}{2}}\omega\|_{L^{2}}^{2}
%+C\|\omega\|_{L^{\infty}}^2\|\Lambda_{h}^{\alpha}u\|_{L^{2}}^{2}.\nonumber
%\]
We adopt the analysis analogous to \eqref{eq:U2} (replacing $\delta$ by $2\rho$). It yields
\begin{align*}
N_{2} & = \int_{\Omega}\widetilde{w}\,\omega\, \Lambda_{h}^{2\rho}\partial_{z}\omega\, \dxdz
= \int_{\Omega}\Lambda_{h}^{2\rho-\frac{\alpha}{2}}(\widetilde{w}\omega )\, \Lambda_{h}^{\frac{\alpha}{2}}\partial_{z}\omega\, \dxdz
\leq C\|\Lambda_{h}^{2\rho-\frac{\alpha}{2}}(\widetilde{w}\omega ) \|_{L^{2}}\|\Lambda_{h}^{\frac{\alpha}{2}}\partial_{z}\omega\|_{L^{2}}\\
& \leq C \Big(\|\omega_0\|_{L^{\infty}}\|\Lambda_{h}^{2\rho-\frac{\alpha}{2}} \partial_xu\|_{L^2} + \|\partial_xu\|_{L_{z}^{2}L_{x}^{\frac{2p}{p-2}}} \|\Lambda_{h}^{2\rho-\frac{\alpha}{2}}\omega\|_{L_{z}^{2}L_{x}^{p}}\Big)\|\Lambda_{h}^{\frac{\alpha}{2}}\partial_{z}\omega\|_{L^{2}}
 \triangleq N_{21} + N_{22},
\end{align*}
where the parameter $p\in(2,\infty)$ will be chosen later in \eqref{eq:p2}.

The term $N_{21}$ can be estimated by:
\begin{equation}\label{eq:N21}
N_{21} \leq C \|\omega_0\|_{L^\infty} \|\Lambda_{h}^{2\rho-\frac\alpha2+1} u\|_{L^2}\|\Lambda_{h}^{\frac{\alpha}{2}}\partial_{z}\omega\|_{L^{2}}
 \leq C\|\omega_0\|_{L^{\infty}}\big(\|\Lambda_{h}^{\delta+\frac\alpha2} u\|_{L^2}^2+\|\Lambda_{h}^{\frac{\alpha}{2}}\partial_{z}\omega\|_{L^{2}}^2\big),	
\end{equation}
provided that
\begin{equation}\label{eq:rhocond1}
	2\rho-\frac\alpha2+1\leq \delta+\frac\alpha2,\quad\Longleftrightarrow\quad \rho\leq \frac{\delta+\alpha-1}{2}.
\end{equation}

For $N_{22}$, $\|\partial_xu\|_{L_{z}^{2}L_{x}^{\frac{2p}{p-2}}}$ can be estimated by \eqref{eq:ux}, namely
\[
\|\partial_{x}u\|_{L_{z}^{2}L_{x}^{\frac{2p}{p-2}}} \leq  C\|\Lambda_{h}^{1+\frac{1}{p}}u\|_{L^{2}}
\leq C\|\Lambda_{h}^{\delta}u\|_{L^{2}}^{1-\theta_{21}} \|\Lambda_{h}^{\delta+\frac{\alpha}{2}}u\|_{L^{2}}^{\theta_{21}},\quad \text{where}\quad \theta_{21} = \tfrac{2-2\delta+\frac{2}{p}}{\alpha},
%= C\|\Lambda_{h}^{\delta}u\|_{L^{2}}^{2\alpha-2} \|\Lambda_{h}^{\delta+\frac{\alpha}{2}}u\|_{L^{2}}^{3-2\alpha}.
\]
with a different $p$ defined in \eqref{eq:p2}.
 We use \eqref{eq:interp2} and interpolate $\|\Lambda_{h}^{2\rho-\frac{\alpha}{2}}\omega\|_{L_{x}^{p}}$ by $\|\omega\|_{L^\infty_x}$ and $\|\Lambda_{h}^{\rho+\frac{\alpha}{2}}\omega\|_{L_{x}^{2}}$. Note that this offers further enhancement compared with \eqref{eq:omegaenhance}. We choose
\begin{equation}\label{eq:p2}
 p \triangleq \frac{4\rho+2\alpha}{4\rho-\alpha}\in(2,\infty), \quad \text{so that}\quad
 \theta_{22} = \frac2p = \frac{2\rho-\frac\alpha2}{\rho+\frac\alpha2},
\end{equation}
and obtain
\begin{equation}\label{eq:omegaenhance2}
 \|\Lambda_{h}^{2\rho-\frac{\alpha}{2}}\omega\|_{L^p_x}\leq C \|\omega\|_{L^\infty_x}^{1-\theta_{22}}\|\Lambda_{h}^{\rho+\frac{\alpha}{2}}\omega\|_{L^2_x}^{\theta_{22}} 
  = C \|\omega\|_{L^\infty_x}^{\frac{2\alpha-2\rho}{2\rho+\alpha}}\|\Lambda_{h}^{\rho+\frac{\alpha}{2}}\omega\|_{L^2_x}^{\frac{4\rho-\alpha}{2\rho+\alpha}}.
\end{equation}
Consequently, we apply Young's inequality and deduce
%\[
%N_{22} \leq   C \|\omega_{0}\|_{L^{\infty}}^{\frac{2\alpha-2\rho}{2\rho+\alpha}}\|\Lambda_{h}^{\delta}u\|_{L^{2}}^{1-\frac{(8-4\delta)\rho+(1-2\delta)\alpha}{\alpha(2\rho+\alpha)}} \|\Lambda_{h}^{\delta+\frac{\alpha}{2}}u\|_{L^{2}}^{\frac{(8-4\delta)\rho+(1-2\delta)\alpha}{\alpha(2\rho+\alpha)}} \|\Lambda_{h}^{\frac{\alpha}{2}}\partial_{z}\omega\|_{L^{2}} \|\Lambda_{h}^{\rho+\frac{\alpha}{2}}\omega\|_{L^2_x}^{\frac{4\rho-\alpha}{2\rho+\alpha}}
%\]
\begin{align}\label{eq:N22}
N_{22} &\leq C \|\omega_{0}\|_{L^{\infty}}^{1-\theta_{22}}\|\Lambda_{h}^{\delta}u\|_{L^{2}}^{1-\theta_{21}} \|\Lambda_{h}^{\delta+\frac{\alpha}{2}}u\|_{L^{2}}^{\theta_{21}} \|\Lambda_{h}^{\frac{\alpha}{2}}\partial_{z}\omega\|_{L^{2}} \|\Lambda_{h}^{\rho+\frac{\alpha}{2}}\omega\|_{L^2_x}^{\theta_{22}} \nonumber\\
&\leq \frac{\nu}{4}\|\Lambda_{h}^{\rho+\frac{\alpha}{2}}\omega\|_{L^{2}}^{2}
+C(\|\Lambda_{h}^{\delta+\frac{\alpha}{2}} u \|_{L^{2}}^{\frac{2\theta_{21}}{1-\theta_{22}}}+\|\Lambda_{h}^{\frac{\alpha}{2}}\partial_{z}\omega\|_{L^{2}}^{2}),	
\end{align}
where we absorb $\|\omega_{0}\|_{L^{\infty}}$ and $\|\Lambda_{h}^{\delta}u\|_{L^{2}}$ (thanks to \eqref{eq:uimprove}) to the constant $C$. Since $\|\Lambda_{h}^{\delta+\frac\alpha2}u\|_{L^{2}}\in L^2(0,T)$, we require
\begin{equation}\label{eq:rhocond}
\frac{2\theta_{21}}{1-\theta_{22}} = \frac{(8-4\delta)\rho+(1-2\delta)\alpha}{\alpha(\alpha-\rho)}\leq 2,\quad \Longleftrightarrow\quad
\rho\leq \frac{\alpha(2\alpha+2\delta-1)}{2(\alpha-2\delta+4)}.
\end{equation}
From the two conditions \eqref{eq:rhocond1} and \eqref{eq:rhocond}, we define
\begin{equation}\label{eq:g}
 g(\delta) \triangleq \min\Big\{\frac{\alpha(2\alpha+2\delta-1)}{2(\alpha-2\delta+4)},\,\frac{\delta+\alpha-1}{2}\Big\} = 
 \begin{cases}
 	\frac{\alpha(2\alpha+2\delta-1)}{2(\alpha-2\delta+4)}, &\delta\in[\frac{2-\alpha}{2}, 2-\alpha],\\
 	\frac{\delta+\alpha-1}{2},&\text{otherwise.} 
 \end{cases}
\end{equation}
Taking $\delta = \delta_1$ in \eqref{eq:delta1}, the conditions \eqref{eq:rhocond1} and \eqref{eq:rhocond} hold when $\rho\leq g(\delta_1)=\rho_1$, where $\rho_1$ is defined in \eqref{eq:rho1}.

Inserting the estimates \eqref{eq:N1}, \eqref{eq:N21} and \eqref{eq:N22} into \eqref{eq:omegarho}, and plugging the value of $\delta$ and $\rho$ in \eqref{eq:delta1} and \eqref{eq:rho1}, respectively, it follows that
\[
 \frac{\dd}{\dd t}\|\Lambda_{h}^{\rho}\omega(t)\|_{L^{2}}^{2} + \nu\|\Lambda_{h}^{\rho+\frac{\alpha}{2}}\omega\|_{L^{2}}^{2}
 \leq C \Big(\|\Lambda_{h}^{\alpha} u \|_{L^{2}}^{2} + \|\Lambda_{h}^{\delta+\frac{\alpha}{2}} u \|_{L^{2}}^{2}+\|\Lambda_{h}^{\frac{\alpha}{2}}\partial_{z}\omega\|_{L^{2}}^{2}\Big) + C\|\Lambda_{h}^{\rho}\omega\|_{L^{2}}^{2}.
\]
From the a priori bounds \eqref{eq:uE1}, \eqref{eq:uimprove} and \eqref{eq:omegazimprove}, we know 
\[\|\Lambda_{h}^{\alpha} u \|_{L^{2}}^{2} + \|\Lambda_{h}^{\delta+\frac{\alpha}{2}} u \|_{L^{2}}^{2}+\|\Lambda_{h}^{\frac{\alpha}{2}}\partial_{z}\omega\|_{L^{2}}^{2}\in L^1(0,T).\]
Applying Gr\"onwall inequality, we obtain
\begin{align*}
 \|\Lambda_{h}^{\rho}\omega(t)\|^2_{L^2} + \nu\int_0^t\|\Lambda_{h}^{\rho+\frac\alpha2}\omega(\tau)\|_{L^2}^2 \dd \tau
 &\leq C\|\Lambda_{h}^{\rho}\omega_0\|^2_{L^2}\,e^C, 
\end{align*}
where $C$ depends on $\|\Lambda_{h}^{\delta}u_0\|_{L^2}$, $\|\partial_z\omega_0\|_{L^{2}}$, $\|\omega_0\|_{L^\infty}$ and $T$.
This completes the proof of \eqref{eq:omegaimprove}.
\end{proof}

\section{Proof of Theorem \ref{thm:GWP}}\label{sec:GWP}
In this section, we establish the global regularity theory for \eqref{eq:main}, with $\alpha\in[\alpha_0,\frac65)$, making effective use of the enhanced a priori bounds obtained in Section \ref{sec:apriori}.

The goal is to verify the regularity criterion \eqref{eq:BKM}, that is,
\[    \int_0^T\|\Lambda_{h}^{\frac{3-\alpha}{2}}\omega(t)\|_{L^2}^2\,\dd t<\infty.
\]
Then, global well-posedness follows from the theory developed in \cite{abdo2025well}.

From Lemma \ref{lem:omegaimprove}, we have the enhanced a priori bound
\[
  \int_0^T\|\Lambda_{h}^{\rho + \frac\alpha2}\omega(t)\|_{L^2}^2 \,\dd t<\infty,\quad\forall\,\rho\in[0,\rho_1].
\]
Hence, the regularity criterion \eqref{eq:BKM} is satisfied whenever 
\[
 \rho_1\geq\rho^*\triangleq\frac{3-2\alpha}{2},\quad\Longleftrightarrow\quad
 \alpha \geq \alpha_1 \triangleq \frac{13-\sqrt{73}}{4} \approx 1.1140.
\]
Therefore, we obtain global well-posedness for $\alpha\in[\alpha_1,\frac65)$.
 
However, since $\alpha_0 < \alpha_1$, a more refined estimate is needed to cover the range $\alpha \in [\alpha_0, \alpha_1)$. This refinement can be achieved by iteratively applying the enhancement procedure developed in Lemmas \ref{lem:uimprove} and \ref{lem:omegaimprove}. 

To begin with, we modify Lemma \ref{lem:uimprove}, making use of the a priori bound on $\|\Lambda_{h}^{\rho_1}\omega\|_{L^2}$ to obtain a further improvement in the a priori control of $\|\Lambda_{h}^{\delta}u\|_{L^2}$ for larger values of $\delta$.

\begin{lemma}\label{lem:uimprove2}
Let $T>0$ and $\alpha\in[\alpha_0,\alpha_1)$. Then we have the bound \eqref{eq:uimprove}, namely
\[
\|\Lambda_{h}^{\delta}u(t)\|^2_{L^2} + \nu\int_0^t\|\Lambda_{h}^{\delta+\frac{\alpha}{2}}u(\tau)\|_{L^2}^2  \dd\tau\leq C,\quad\forall\, t\in[0,T],
\]
for any $\delta\in (\delta_1,\delta_2]$, where
\begin{align}\label{eq:delta2}
\delta_2 \triangleq \frac{\alpha(2\alpha-1)(2-\alpha)}{2(4-\alpha-2\alpha^2)},
\end{align}
and $C$ depends on $\|\Lambda_{h}^{\delta}u_0\|_{L^2}$, $\|\Lambda_{h}^{\delta_*}u_0\|_{L^2}$, $\|\Lambda_{h}^{\rho_1}\omega_0\|_{L^{2}}$, $\|\partial_z\omega_0\|_{L^{2}}$, $\|\omega_0\|_{L^\infty}$ and $T$.
\end{lemma}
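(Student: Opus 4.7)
The plan is to revisit the proof of Lemma~\ref{lem:uimprove}, keeping its overall structure intact while upgrading the interpolation used for $\|\Lambda_{h}^{\delta-\frac{\alpha}{2}}\omega\|_{L^{p}_{x}}$ in the term $U_{22}$. The essential new input is Lemma~\ref{lem:omegaimprove}, which (taking $\rho=\rho_{1}$) furnishes the enhanced control $\|\Lambda_{h}^{\rho_{1}+\frac{\alpha}{2}}\omega\|_{L^{2}}\in L^{2}(0,T)$. Using this stronger a priori bound in place of the weaker $\|\Lambda_{h}^{\frac{\alpha}{2}}\omega\|_{L^{2}}\in L^{2}(0,T)$ employed in \eqref{eq:omegaenhance} should enlarge the admissible range of $\delta$.

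Starting from the same energy identity \eqref{eq:udelta} with splitting $U_{2}=U_{21}+U_{22}$ as in \eqref{eq:U2}, I would handle $U_{1}$ and $U_{21}$ exactly as in \eqref{eq:U1} and \eqref{eq:U21}, since those arguments depend only on $\|\omega_{0}\|_{L^{\infty}}$ and on interpolation of $u$. For $U_{22}$, I would interpolate $\|\Lambda_{h}^{\delta-\frac{\alpha}{2}}\omega\|_{L^{p}_{x}}$ via \eqref{eq:interp2} between $\|\omega\|_{L^{\infty}_{x}}$ and $\|\Lambda_{h}^{\rho_{1}+\frac{\alpha}{2}}\omega\|_{L^{2}_{x}}$, with the natural choice
\[
p=\frac{2\rho_{1}+\alpha}{\delta-\frac{\alpha}{2}},\qquad \widetilde{\theta}_{12}=\frac{2}{p}=\frac{\delta-\frac{\alpha}{2}}{\rho_{1}+\frac{\alpha}{2}}.
\]
Combining this with the interpolation \eqref{eq:ux} for $\|\partial_{x}u\|_{L^{2}_{z}L^{\frac{2p}{p-2}}_{x}}$, whose exponent becomes $\theta_{11}=(2-2\delta+\widetilde{\theta}_{12})/\alpha$ with the new value of $p$, and applying Young's inequality, one obtains
\[
U_{22}\leq \tfrac{\nu}{8}\|\Lambda_{h}^{\delta+\frac{\alpha}{2}}u\|_{L^{2}}^{2}+C\,\|\omega_{0}\|_{L^{\infty}}^{\frac{2(1-\widetilde{\theta}_{12})}{1-\theta_{11}}}\|\Lambda_{h}^{\rho_{1}+\frac{\alpha}{2}}\omega\|_{L^{2}}^{\frac{2\widetilde{\theta}_{12}}{1-\theta_{11}}}\|\Lambda_{h}^{\delta}u\|_{L^{2}}^{2}.
\]

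The analogue of constraint \eqref{eq:deltabound} is $\widetilde{\theta}_{12}+\theta_{11}\leq 1$, since $\|\Lambda_{h}^{\rho_{1}+\frac{\alpha}{2}}\omega\|_{L^{2}}\in L^{2}(0,T)$. I expect the main obstacle to be the algebraic verification that this inequality, with $\rho_{1}=\frac{\alpha(2\alpha-1)}{4(2-\alpha)}$ on the relevant interval $\alpha\in[\alpha_{0},\alpha_{1})$, is equivalent to $\delta\leq \delta_{2}$ defined in \eqref{eq:delta2}. A direct computation, multiplying through by $\alpha(\rho_{1}+\frac{\alpha}{2})$, reduces the condition to
\[
\delta\,(1-2\rho_{1})\leq \bigl(\rho_{1}+\tfrac{\alpha}{2}\bigr)(\alpha-2)+\tfrac{\alpha(\alpha+1)}{2},
\]
and then, using $\rho_{1}+\frac{\alpha}{2}=\frac{3\alpha}{4(2-\alpha)}$ and $1-2\rho_{1}=\frac{4-\alpha-2\alpha^{2}}{2(2-\alpha)}>0$ on the relevant range, the right-hand side simplifies to $\frac{\alpha(2\alpha-1)}{4}$, yielding precisely $\delta\leq \delta_{2}$.

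Assembling $U_{1}$, $U_{21}$, and $U_{22}$ produces the differential inequality
\[
\frac{\dd}{\dd t}\|\Lambda_{h}^{\delta}u\|_{L^{2}}^{2}+\nu\|\Lambda_{h}^{\delta+\frac{\alpha}{2}}u\|_{L^{2}}^{2}\leq C\bigl(1+\|\Lambda_{h}^{\rho_{1}+\frac{\alpha}{2}}\omega\|_{L^{2}}^{2}\bigr)\|\Lambda_{h}^{\delta}u\|_{L^{2}}^{2}.
\]
Grönwall's inequality, together with the bound \eqref{eq:omegaimprove} (whose constant depends, through Lemmas~\ref{lem:uimprove} and~\ref{lem:omegazimprove}, on $\|\Lambda_{h}^{\delta_{*}}u_{0}\|_{L^{2}}$, $\|\partial_{z}\omega_{0}\|_{L^{2}}$, and $\|\Lambda_{h}^{\rho_{1}}\omega_{0}\|_{L^{2}}$), closes the estimate and establishes \eqref{eq:uimprove} throughout $\delta\in(\delta_{1},\delta_{2}]$.
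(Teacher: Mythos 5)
Your proposal is correct and follows essentially the same route as the paper: it reruns the energy estimate of Lemma \ref{lem:uimprove}, upgrades the interpolation in $U_{22}$ by replacing $\|\Lambda_{h}^{\frac{\alpha}{2}}\omega\|_{L^2}$ with $\|\Lambda_{h}^{\rho_1+\frac{\alpha}{2}}\omega\|_{L^2}$ supplied by Lemma \ref{lem:omegaimprove}, and your algebraic reduction of the constraint $\widetilde{\theta}_{12}+\theta_{11}\leq 1$ correctly yields $\delta\leq\delta_2=f(\rho_1)$ as in \eqref{eq:deltacond}. The concluding Grönwall step matches the paper's as well.
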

\begin{proof}
 The proof follows from Lemma \ref{lem:uimprove}. We improve the bound \eqref{eq:omegaenhance}, replacing $\|\Lambda_{h}^{\frac{\alpha}{2}}\omega\|_{L^2_x}$ with  $\|\Lambda_{h}^{\rho+\frac{\alpha}{2}}\omega\|_{L^2_x}$ in the interpolation:
 \begin{equation}\label{eq:omegaenhance3}
  \|\Lambda_{h}^{\delta-\frac{\alpha}{2}}\omega\|_{L^p_x}\leq C \|\omega\|_{L^\infty_x}^{1-\theta_{12}}\|\Lambda_{h}^{\rho+\frac{\alpha}{2}}\omega\|_{L^2_x}^{\theta_{12}} 
  = C \|\omega\|_{L^\infty_x}^{\frac{2\alpha+2\rho-2\delta}{2\rho+\alpha}}\|\Lambda_{h}^{\rho+\frac{\alpha}{2}}\omega\|_{L^2_x}^{\frac{2\delta-\alpha}{2\rho+\alpha}},
 \end{equation}
 where $p$ and $\theta_{12}$ are redefined by
 \[
  p \triangleq \frac{2(2\rho+\alpha)}{2\delta-\alpha}\in (2,\infty),\quad \text{so that} \quad
  \theta_{12}=\frac{2}{p} = \frac{\delta-\frac\alpha2}{\rho+\frac\alpha2},
 \]
 which is analogous to \eqref{eq:omegaenhance2}. Together with \eqref{eq:ux}, the estimate on $U_{22}$ becomes
%\begin{align*}%\label{eq:U22enhance}
% U_{22} & \leq C \|\omega_0\|_{L^\infty}^{\frac{2\alpha+2\rho-2\delta}{2\rho+\alpha}}\|\Lambda_{h}^{\rho+\frac{\alpha}{2}}\omega\|_{L^2}^{\frac{2\delta-\alpha}{2\rho+\alpha}}\|\Lambda_{h}^{\delta}u\|_{L^{2}}^{1-\frac{\alpha-2\delta\alpha+2\delta+4\rho(1-\delta)}{\alpha(2\rho+\alpha)}} \|\Lambda_{h}^{\delta+\frac{\alpha}{2}}u\|_{L^{2}}^{\frac{\alpha-2\delta\alpha+2\delta+4\rho(1-\delta)}{\alpha(2\rho+\alpha)}+1} \nonumber\\
% &\leq \frac{\nu}{8} \|\Lambda_{h}^{\delta+\frac{\alpha}{2}}u\|_{L^2}^2 + C\|\Lambda_{h}^{\rho+\frac{\alpha}{2}}\omega\|_{L^{2}}^{\frac{2\alpha(2\delta-\alpha)}{(\alpha-1)(2\delta+\alpha)+2\rho(\alpha+2\delta-2)}}\|\Lambda_{h}^{\delta}u\|_{L^{2}}^{2},
%\end{align*}
 \begin{align}\label{eq:U22e}
 U_{22} & \leq C \|\omega_0\|_{L^\infty}^{1-\theta_{12}}\|\Lambda_{h}^{\rho+\frac{\alpha}{2}}\omega\|_{L^2}^{\theta_{12}}\|\Lambda_{h}^{\delta}u\|_{L^{2}}^{1-\theta_{11}} \|\Lambda_{h}^{\delta+\frac{\alpha}{2}}u\|_{L^{2}}^{1+\theta_{11}} \nonumber\\
 &\leq \frac{\nu}{8} \|\Lambda_{h}^{\delta+\frac{\alpha}{2}}u\|_{L^2}^2 + C\|\omega_0\|_{L^{\infty}}^{\frac{2(1-\theta_{12})}{1-\theta_{11}}}\|\Lambda_{h}^{\rho+\frac{\alpha}{2}}\omega\|_{L^{2}}^{\frac{2\theta_{12}}{1-\theta_{11}}}\|\Lambda_{h}^{\delta}u\|_{L^{2}}^{2}.
\end{align}
%where we absorb $\|\omega_0\|_{L^\infty}$ to the constant $C$. 
To apply Gr\"onwall inequality, we require
\begin{equation}\label{eq:deltacond}
 \frac{2\theta_{12}}{1-\theta_{11}}=\frac{2\alpha(2\delta-\alpha)}{(\alpha-1)(2\delta+\alpha)+2\rho(\alpha+2\delta-2)}\leq2,\quad\Longleftrightarrow\quad
 \delta\leq\frac{2\alpha^2-\alpha + 2\rho(\alpha-2)}{2(1-2\rho)}\triangleq f(\rho).	
\end{equation}
Note that if we take $\rho=0$, \eqref{eq:deltacond} reduces to the bound \eqref{eq:deltabound} in Lemma \ref{lem:uimprove} with $\delta_1 = f(0)$. 
Thanks to the a priori bound on $\|\Lambda_{h}^{\rho_1}\omega\|_{L^2}$, we plug in $\rho=\rho_1$ and deduce that 
\[\delta\leq f(\rho_1) = \delta_2,\]
where we note that $\rho_1 = \frac{2\alpha^2-\alpha}{8-4\alpha}$ when $\alpha\in[\alpha_0,\alpha_1)$.
The rest of the argument is analogous to Lemma \ref{lem:uimprove}, and we conclude with the bound \eqref{eq:uimprove}.
\end{proof}

Having a priori bound on $\|\Lambda_{h}^{\delta_2}u\|_{L^2}$, we reapply Lemma \ref{lem:omegaimprove}, and take $\delta=\delta_2$ in \eqref{eq:delta2}. This leads to the bound \eqref{eq:omegaimprove} for $\rho\in(\rho_1,\rho_2]$, where
 \[\rho_2\triangleq g(\delta_2) = \min\Big\{\frac{\alpha(2\alpha+2\delta_2-1)}{2(\alpha-2\delta_2+4)},\,\frac{\delta_2+\alpha-1}{2}\Big\}.\]
The regularity criterion \eqref{eq:BKM} is satisfied whenever
\[
 \rho_2\geq\rho^*,\quad\Longleftrightarrow\quad
 \alpha \geq \alpha_2 \approx 1.0635,
\]
where $\alpha_2$ is the root of the cubic equation $6\alpha^3+17\alpha^2-70\alpha+48=0$.
Importantly, $\alpha_2<\alpha_0$. Therefore, we obtain global well-posedness over the entire range $[\alpha_0, \tfrac{6}{5})$.
See the right panel of Figure \ref{fig1} for illustration.

\begin{remark}
 The iterations can continue, offering a priori bounds for higher derivatives of $u$ and $\omega$. Indeed, from conditions \eqref{eq:deltacond}	and \eqref{eq:rhocond}, we have the following iterative scheme:
 \[
 \delta_{k+1} = f(\rho_k) = \frac{2\alpha^2-\alpha + 2\rho_k(\alpha-2)}{2(1-2\rho_k)},\quad
 \rho_{k+1} = g(\delta_{k+1}),
 \]
 with $\rho_0=0$, and $g$ is defined in \eqref{eq:g}. Note that 
 \[
 g^{-1}(\rho) = \begin{cases}
 	\frac{\alpha+4}{2} - \frac{3\alpha(\alpha+1)}{2(\alpha+2\rho)}, & \rho\in[\frac\alpha4,\frac12],\\
 	2\rho-\alpha+1, & \text{otherwise}.
 \end{cases}
 \]
Since $\{\rho_k\}$ is increasing with $\rho_1\geq\frac\alpha4$ and $\rho^*<\frac12$, we have the following recursive formula:
\[
 \rho_{k+1} = g(\delta_{k+1}) = \frac{\alpha(2\alpha+2\delta_{k+1}-1)}{2(\alpha-2\delta_{k+1}+4)} = \frac{\alpha(2\alpha-1-2\rho_k)}{4(2-\alpha-2\rho_k)},\quad \rho_0=0,
\] 
as long as $\rho_{k+1}\leq\rho^*$. Analysis of this formula reveals a dichotomy:
 \begin{itemize}
  \item If $\alpha\in(1,\frac{4}{\sqrt{15}}]$, then $\rho_k$ has an upper bound: 
  \[\lim_{k\to\infty}\rho_k = \frac{4-\alpha-\sqrt{16-15\alpha^2}}{8}=\rho_M<\rho^*.\]
 Hence, the criterion \eqref{eq:BKM} cannot be verified. See the left panel of Figure \ref{fig1} for illustration.
  \item If $\alpha\in(\frac{4}{\sqrt{15}},\frac65)$, $\rho_k$ exceeds $\frac12$ after finitely many steps, allowing the enhancement procedure to reach the desired $\rho^*$. See the right panel of Figure \ref{fig1} for illustration.
 \end{itemize}
This shows that, even if the crucial assumption $\alpha \geq \alpha_0$ were removed, our approach still does not extend directly to the case $\alpha>1$.
\end{remark}

\begin{figure}[ht]
\includegraphics[width=.45\linewidth]{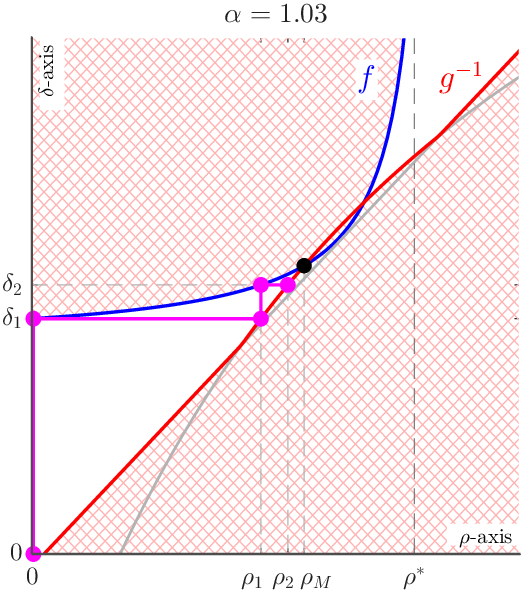}\quad	
\includegraphics[width=.45\linewidth]{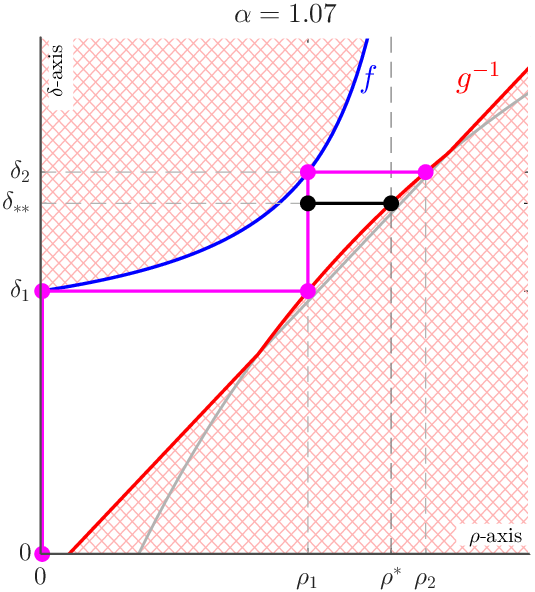}
\caption{An illustration of the iteration scheme. Left ($1<\alpha\leq\frac4{\sqrt{15}}$): the maximal enhancement on $\rho$ is $\rho_M$, which is smaller than $\rho^*$. Right ($\frac4{\sqrt{15}}<\alpha<\frac65$): there is a path such that $\rho$ reaches $\rho^*$, and the minimal requirement on $\delta$ is $\delta_{**}$.}\label{fig1}
\end{figure}

Finally, we determine the minimal regularity requirement on $u_0$ that ensures global regularity. In the last iteration $k+1$, such that $\rho_k \leq \rho^* < \rho_{k+1}$, we apply Lemma \ref{lem:omegaimprove} with $\rho = \rho^*$. In this case, the condition \eqref{eq:rhocond} becomes
\[
\rho^*\leq \frac{\alpha(2\alpha+2\delta-1)}{2(\alpha-2\delta+4)},\quad \Longleftrightarrow\quad
\delta\geq \frac{2(-\alpha^2-\alpha+3)}{3-\alpha}\triangleq \delta_{**}.
\]
Hence, we may replace $\delta_{k+1}$ by the (smaller) quantity $\delta_{**}$ in Lemma \ref{lem:omegaimprove} (see the right panel of Figure \ref{fig1}). Consequently, the regularity criterion \eqref{eq:BKM} holds if the initial data satisfy
\[
\Lambda_{h}^{\delta_*}u_0\in L^2,\quad \Lambda_{h}^{\delta_{**}}u_0\in L^2,\quad \Lambda_{h}^{\frac{3-2\alpha}{2}}\omega_0\in L^2,\quad  \partial_z\omega_0\in L^2,\quad \text{and}\quad  \omega_0\in L^\infty.
\]
Define 
\[
	\delta_m \triangleq \max \{\delta_*,\, \delta_{**}\} = \max \Big\{\frac{\alpha}{2},\,\frac{-2\alpha^2+2\alpha+1}{\alpha},\, \frac{2(-\alpha^2-\alpha+3)}{3-\alpha}\Big\}.
\]
Then, $\delta_m$ is the minimum initial regularity on $u_0$ that guarantees global well-posedness of the system \eqref{eq:main}. Observe that 
\[
 \frac{-2\alpha^2+2\alpha+1}{\alpha}\leq \max \Big\{\frac{\alpha}{2},\, \frac{2(-\alpha^2-\alpha+3)}{3-\alpha}\Big\},\quad \forall\,\alpha\in[\alpha_0,\tfrac65).
\]
Therefore, $\delta_m$ can be expressed as in \eqref{eq:deltam}. The proof of Theorem \ref{thm:GWP} is thus complete.

\section{Proof of Theorem \ref{thm:small}}\label{sec:GWPsmall}
In this section, we focus on the case $\alpha \in [1, \alpha_0)$ and establish the global well-posedness of the system \eqref{eq:main} with small initial data.

In \cite[Theorem 4.9]{abdo2025well}, it was shown that for $\alpha \in (1,2)$, the system \eqref{eq:main} admits a unique global solution provided the initial data are sufficiently small. More precisely, there exists a constant $c > 0$ such that global well-posedness holds whenever
\[
 \|\omega_0\|_{L^\infty}<c,\quad \text{and}\quad \|\Lambda_{h}^{\alpha} u_0\|^2_{L^2}+\|\Lambda_{h}^{\frac{\alpha}{2}}\omega_0\|_{L^{2}}^{2}+ \|\partial_{z}\omega_0\|^2_{L^2}<c.
\]
The result can be extended to the critical regime $(\alpha=1)$, see \cite[Theorem 5.9]{abdo2025well}.

Our goal here is to remove the second smallness assumption, so that smallness is required only for $\|\omega_0\|_{L^\infty}$. In addition, our theory imposes weaker regularity requirements on the initial data.

We propagate the three quantities $\|\Lambda_{h}^{\delta}u\|_{L^2}$, $ \|\Lambda_{h}^{\rho}\omega\|_{L^2}$ and $\|\partial_z\omega\|_{L^2}$ simultaneously by applying Lemmas \ref{lem:uimprove}, \ref{lem:omegaimprove}, and \ref{lem:omegazimprove}, respectively. We will choose $\delta$ and $\rho$ in \eqref{eq:rhodelta}. Define
\begin{align*}
X(t) & = \|\Lambda_{h}^{\delta}u(t)\|^2_{L^2} + \|\Lambda_{h}^{\rho}\omega(t)\|^2_{L^2} +\|\partial_z\omega(t)\|^2_{L^2},\\
Y(t) & = \|\Lambda_{h}^{\delta+\frac{\alpha}{2}}u(t)\|_{L^2}^2 + \|\Lambda_{h}^{\rho+\frac{\alpha}{2}}\omega(t)\|_{L^2}^2+\|\Lambda_{h}^{\frac\alpha2}\partial_z\omega(t)\|^2_{L^2}.
\end{align*}
Then we have
\begin{equation}\label{eq:coupled}
 \frac12\frac{\dd}{\dd t} X(t) +\nu Y(t) = U_1 + U_{21} + U_{22} + N_1 + N_{21} + N_{22} + J_1 + J_2 \triangleq \sum_{i=1}^8 R_i(t).
\end{equation}

We now state the following key lemma.
\begin{lemma}\label{lem:small}
 Let $\alpha\in[1,\alpha_0)$, and
 \begin{equation}\label{eq:rhodelta}
\rho = \rho^* = \frac{3-2\alpha}{2},\quad\text{and}\quad \delta= \delta_{**} = \frac{2(-\alpha^2-\alpha+3)}{3-\alpha}. 	
 \end{equation}
 Then, for any $t>0$, each term on the right-hand side of \eqref{eq:coupled} can be bounded by
\begin{equation}\label{eq:smallICbound}
  R_i(t)\leq C_i\|\omega_0\|_{L^\infty}^{\mu_i} X(t)^{\frac{1-{\mu_i}}2}Y(t),\quad i=1,\ldots,8,
\end{equation}
where the parameter $\mu_i\in(0,1]$, and $C_i$ is a universal constant independent of initial data.
\end{lemma}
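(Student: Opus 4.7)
\textbf{Proof proposal for Lemma \ref{lem:small}.} The plan is to revisit each of the eight terms $R_1=U_1$, $R_2=U_{21}$, $R_3=U_{22}$, $R_4=N_1$, $R_5=N_{21}$, $R_6=N_{22}$, $R_7=J_1$, $R_8=J_2$ using the very same interpolation computations carried out in the proofs of Lemmas \ref{lem:uimprove}, \ref{lem:omegaimprove}, and \ref{lem:omegazimprove}, but stopping \emph{before} Young's inequality is applied to absorb dissipation into the left-hand side. In those proofs, whenever a factor of $\|u\|_{L^\infty}$ or $\|\omega\|_{L^\infty}$ appears, one applies \eqref{eq:uLinf} and \eqref{eq:omegaMP} to produce a strictly positive power of $\|\omega_0\|_{L^\infty}$. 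Tracking these factors yields, for each $i$, a preliminary bound of the form $R_i\leq C\|\omega_0\|_{L^\infty}^{a_i}\prod_j \|\Lambda_h^{s_j}v_j\|_{L^2}^{b_{ij}}$, with total Sobolev-norm degree equal to $2$ (trilinear structure with one factor already measured in $L^\infty$).

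The next step is to re-express every Sobolev norm that appears in terms of the six norms that make up $X(t)$ and $Y(t)$, using the interpolation lemmas \eqref{eq:interp1}, \eqref{eq:interp2} and the anisotropic inequalities \eqref{eq:wAniso}, \eqref{eq:ux}. For instance, $\|\Lambda_h^{\alpha/2}\omega\|_{L^2}$ is interpolated between $\|\Lambda_h^{\rho^*}\omega\|_{L^2}$ and $\|\Lambda_h^{\rho^*+\alpha/2}\omega\|_{L^2}$ (note $\rho^*\leq \alpha/2$ since $\alpha\geq 1$), and analogously for any norm of $u$ lying between $\|\Lambda_h^{\delta_{**}}u\|_{L^2}$ and $\|\Lambda_h^{\delta_{**}+\alpha/2}u\|_{L^2}$. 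Collecting all these interpolations places the estimate in the form
\[
 R_i\leq C\,\|\omega_0\|_{L^\infty}^{a_i}\,X(t)^{B_i}\,Y(t)^{C_i},
\]
with the scaling identity $a_i+2B_i+2C_i=3$.

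The third step is the key verification: for each $i$ one has $C_i\leq 1$ and $B_i+C_i\geq 1$. The bound $B_i+C_i\geq 1$ is equivalent to $a_i\leq 1$ by scaling and follows because each term uses at most one $L^\infty$ factor. The hard inequality is $C_i\leq 1$, and this is where the specific choices $\rho=\rho^*$ and $\delta=\delta_{**}$ become indispensable. For the easy terms $U_1,U_{21},N_1,N_{21},J_1$ it is immediate. For the borderline terms, the argument reduces, essentially, to the condition $\delta_{**}\leq\alpha$ (handled for $U_{22}$) together with the tightness relation $\rho^*=g(\delta_{**})$ used to define $\delta_{**}$ (which gives $C_i\leq 1$ for $N_{22}$), and an analogous computation for $J_2$ that exploits the enhanced interpolation \eqref{eq:omegaenhance2} together with the identity $\rho^*+\frac{\alpha}{2}=\frac{3-\alpha}{2}$ matching the BKM critical regularity \eqref{eq:BKM}.

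Finally, to pass from $X^{B_i}Y^{C_i}$ to $X^{(1-\mu_i)/2}Y$ we invoke the horizontal Poincar\'e inequality. Since $\Lambda_h^{\delta_{**}}u$ and $\Lambda_h^{\rho^*}\omega$ have zero horizontal mean for each $z$ (because $\delta_{**},\rho^*>0$), one has $\|\Lambda_h^{\delta_{**}}u\|_{L^2_x}\leq C\|\Lambda_h^{\delta_{**}+\alpha/2}u\|_{L^2_x}$ and $\|\Lambda_h^{\rho^*}\omega\|_{L^2_x}\leq C\|\Lambda_h^{\rho^*+\alpha/2}\omega\|_{L^2_x}$; a similar argument applies to $\partial_z\omega$ modulo its $x$-mean (which is separately controlled). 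Writing $X^{B_i}Y^{C_i}=X^{B_i+C_i-1}\cdot(X^{1-C_i}Y^{C_i})$ and applying Poincar\'e to bound $X^{1-C_i}\leq C\,Y^{1-C_i}$ (valid since $1-C_i\geq 0$) gives $X^{B_i}Y^{C_i}\leq C\,X^{B_i+C_i-1}\,Y$. By the scaling identity, $B_i+C_i-1=(1-a_i)/2$, so setting $\mu_i=a_i\in(0,1]$ produces exactly \eqref{eq:smallICbound}. The main obstacle throughout the argument is the verification $C_i\leq 1$ for the three borderline terms $U_{22}$, $N_{22}$, $J_2$; this is precisely what dictates the definitions of $\rho^*$ and $\delta_{**}$ and is where the restriction $\alpha\in[1,\alpha_0)$ enters in an essential way.
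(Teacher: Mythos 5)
Your overall strategy coincides with the paper's: revisit each of the eight trilinear terms, retain the $\|\omega_0\|_{L^\infty}$ factors coming from \eqref{eq:uLinf} and \eqref{eq:omegaMP}, interpolate every remaining norm onto the six norms constituting $X$ and $Y$, check that the total power of $Y$ does not exceed one, and use the horizontal Poincar\'e inequality to normalize what is left to $X^{(1-\mu_i)/2}Y$. The scaling identity $a_i+2B_i+2C_i=3$, the identification $\mu_i=a_i$, and the verification of the exponent conditions for $U_1,U_{21},U_{22},N_1,N_{21},N_{22},J_1$ all match the paper's computations.

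There is, however, a genuine gap in your final normalization step as applied to $J_2$. You write $X^{B_i}Y^{C_i}=X^{B_i+C_i-1}\bigl(X^{1-C_i}Y^{C_i}\bigr)$ and then claim $X^{1-C_i}\leq C\,Y^{1-C_i}$ "by Poincar\'e." This requires $X\leq CY$, i.e., a horizontal Poincar\'e inequality for \emph{every} component of $X$, and in particular $\|\partial_z\omega\|_{L^2}\leq C\|\Lambda_h^{\alpha/2}\partial_z\omega\|_{L^2}$. That inequality is false: $\partial_z\omega=\partial_z^2u$ has no reason to have zero horizontal mean, and its $x$-mean is not "separately controlled" --- it is precisely one of the dynamical quantities being propagated through $X(t)$. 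For $J_2$ the upgrade to a full power of $Y$ must therefore be extracted entirely from the factor $\|\Lambda_h^{\delta}u\|_{L^2}^{1/2}$, whose available $X$-exponent is only $\tfrac14$; this forces the extra constraint $1-C_8\leq\tfrac14$, equivalently $\theta_{32}+2\theta_{33}\geq1$, i.e. $\delta\leq h(\rho)+\tfrac{\alpha}{2}$, which your argument never produces and which must then be checked for the choice $(\rho^*,\delta_{**})$. Without it, the residual exponent of $\|\partial_z\omega\|_{L^2}$ after normalization would be negative and \eqref{eq:smallICbound} would not follow. A minor further omission: at $\alpha=1$ the condition $\delta\leq f(\rho)$ degenerates (since $f(\tfrac12)$ is undefined) and $U_{22}$ must be checked by direct computation of $\theta_{11}=\theta_{12}=\tfrac12$; your exponent-counting framework accommodates this, but the case deserves explicit mention.
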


Given Lemma \ref{lem:small}, we are ready to prove Theorem \ref{thm:small}.
Set
\begin{equation}\label{eq:c}
 c \triangleq \min_{i=1,\ldots,8}\Big(\frac{\nu}{16C_i}\Big)^{\frac{1}{\mu_i}}X_0^{-\frac{1-\mu_i}{2\mu_i}}>0.
\end{equation}
By this choice we have
\[
C_i\|\omega_0\|_{L^\infty}^{\mu_i} X_0^{\frac{1-{\mu_i}}2}\leq \frac{\nu}{16},\quad\forall \, i=1,\ldots,8.
\]
Suppose $X(t_*) = X_0$ at time $t_*$. Then, from \eqref{eq:coupled} and \eqref{eq:smallICbound} we obtain
\[
\frac12\frac{\dd}{\dd t}X(t_*)=\sum_{i=1}^8\Big(-\frac\nu8 Y(t_*)+R_i(t_*)\Big)
\leq -\sum_{i=1}^8\Big(\frac\nu8 - C_i\|\omega_0\|_{L^\infty}^{\mu_i} X_0^{\frac{1-{\mu_i}}2}\Big)Y(t_*)\leq0.
\]
Therefore, $X(t)$ cannot exceed $X_0$, that is,
\[
X(t)\leq X_0,\quad\forall\,t\geq0.
\]
Consequently, for every $t\geq0$ we have
\[
\frac{\dd}{\dd t}X(t) + \nu Y(t)=2\sum_{i=1}^8\Big(-\frac{\nu}{16} Y(t)+R_i(t)\Big)
\leq -2\sum_{i=1}^8\Big(\frac{\nu}{16} - C_i\|\omega_0\|_{L^\infty}^{\mu_i} X_0^{\frac{1-{\mu_i}}2}\Big)Y(t)\leq0,
\]
which implies
\[
\int_0^T Y(t)\,\dd t\leq \frac1\nu X_0<\infty,\quad\text{i.e.}\quad
Y\in L^1(0,T).
\]
In particular, the regularity criterion \eqref{eq:BKM} is satisfied:
\[\int_0^T\|\Lambda_{h}^{\frac{3-\alpha}{2}}\omega(t)\|_{L^2}^2\,\dd t = \int_0^T\|\Lambda_{h}^{\rho^*+\frac{\alpha}{2}}\omega(t)\|_{L^2}^2\,\dd t<\infty.\]
Therefore the solution extends globally, completing the proof of Theorem \ref{thm:small}. 
\vskip 1em

It remains to prove Lemma \ref{lem:small}. We treat the subcritical regime $\alpha>1$ and the critical regime $\alpha=1$ separately.

\subsection{Subcritical regime}
Assume $\alpha\in(1,\alpha_0)$. We now verify the bound \eqref{eq:smallICbound} for each term $R_i$, $i=1,\ldots,8$.

For $U_1$ and $U_{21}$, from \eqref{eq:U1}, \eqref{eq:U21} and Poincar\'e inequality, we have
\begin{align*}
 U_1 + U_{21} & \leq C\|\omega_0\|_{L^{\infty}}\|\Lambda_{h}^{\delta}u\|_{L^2}^{\frac{2\alpha-2}{\alpha}} \|\Lambda_{h}^{\delta+\frac{\alpha}{2}}u\|_{L^2}^{\frac{2}{\alpha}}\leq C\|\omega_0\|_{L^{\infty}} \|\Lambda_{h}^{\delta+\frac{\alpha}{2}}u\|_{L^2}^2\leq 	C\|\omega_0\|_{L^{\infty}} Y(t).
\end{align*}
Then \eqref{eq:smallICbound} holds with $\mu_1=\mu_2=1.$
For $U_{22}$, we deduce from \eqref{eq:U22e} that
\begin{align*}
U_{22} & \leq C \|\omega_0\|_{L^\infty}^{1-\theta_{12}}\|\Lambda_{h}^{\rho+\frac{\alpha}{2}}\omega\|_{L^2}^{\theta_{12}}\|\Lambda_{h}^{\delta}u\|_{L^{2}}^{1-\theta_{11}} \|\Lambda_{h}^{\delta+\frac{\alpha}{2}}u\|_{L^{2}}^{1+\theta_{11}}\leq C \|\omega_0\|_{L^\infty}^{1-\theta_{12}} \|\Lambda_{h}^{\delta}u\|_{L^{2}}^{1-\theta_{11}} Y(t)^{\frac{1+\theta_{11}+\theta_{12}}{2}}.
\end{align*}
To obtain \eqref{eq:smallICbound}, we require $\frac{1+\theta_{11}+\theta_{12}}{2}\leq1$, which is precisely \eqref{eq:deltacond}: $\delta\leq f(\rho)$. Under the assumption, we use Poincar\'e inequality and deduce
\[
 U_{22} \leq C\|\omega_0\|_{L^\infty}^{1-\theta_{12}}\|\Lambda_{h}^\delta u\|_{L^{2}}^{\theta_{12}}\, Y(t)\leq C\|\omega_0\|_{L^\infty}^{1-\theta_{12}} X(t)^{\frac{\theta_{12}}{2}}Y(t),
\]
which has the form \eqref{eq:smallICbound} with 
\[\mu_3=1-\theta_{12}=\frac{2\alpha+2\rho-2\delta}{2\rho+\alpha}\in(0,1]
\quad\Longleftrightarrow\quad
\frac\alpha2\leq\delta<\alpha+\rho.\]

For $N_1$, from \eqref{eq:N1} and Poincar\'e inequality, we have
\[
N_1 \leq C\|\omega_0\|_{L^{\infty}} \big( \|\Lambda_{h}^{\alpha}u\|_{L^{2}}+ \|\Lambda_{h}^{\rho}\omega\|_{L^{2}}^{\frac{2\alpha-2}{\alpha}} \|\Lambda_{h}^{\rho+\frac\alpha2}\omega\|_{L^{2}}^{\frac{2-\alpha}\alpha}\big) \|\Lambda_{h}^{\rho+\frac{\alpha}{2}} \omega\|_{L^{2}}\leq C\|\omega_0\|_{L^\infty}Y(t).
\]
Then \eqref{eq:smallICbound} holds with $\mu_4=1.$
For $N_{21}$, we obtain from \eqref{eq:N21} that
\[
N_{21} \leq C\|\omega_0\|_{L^{\infty}}\big(\|\Lambda_{h}^{\delta+\frac\alpha2} u\|_{L^2}^2+\|\Lambda_{h}^{\frac{\alpha}{2}}\partial_{z}\omega\|_{L^{2}}^2\big) \leq C\|\omega_0\|_{L^\infty}Y(t)	,
\]
provided that \eqref{eq:rhocond1} holds. Then \eqref{eq:smallICbound} holds with $\mu_5=1.$
For $N_{22}$, 
\begin{align*}
N_{22} & \leq C \|\omega_{0}\|_{L^{\infty}}^{1-\theta_{22}}\|\Lambda_{h}^{\delta}u\|_{L^{2}}^{1-\theta_{21}} \|\Lambda_{h}^{\delta+\frac{\alpha}{2}}u\|_{L^{2}}^{\theta_{21}} \|\Lambda_{h}^{\frac{\alpha}{2}}\partial_{z}\omega\|_{L^{2}} \|\Lambda_{h}^{\rho+\frac{\alpha}{2}}\omega\|_{L^2_x}^{\theta_{22}}\\
& \leq C \|\omega_{0}\|_{L^{\infty}}^{1-\theta_{22}}\|\Lambda_{h}^{\delta}u\|_{L^{2}}^{1-\theta_{21}} Y(t)^{\frac{1+\theta_{21}+\theta_{22}}2}.
\end{align*}
To obtain control by \eqref{eq:smallICbound}, we require $\frac{1+\theta_{21}+\theta_{22}}{2}\leq1$, which is precisely \eqref{eq:rhocond}. Under the assumption, we use Poincar\'e inequality and deduce
\[
 N_{22} \leq C\|\omega_0\|_{L^\infty}^{1-\theta_{22}}\|\Lambda_{h}^\delta u\|_{L^{2}}^{\theta_{22}}\, Y(t)\leq C\|\omega_0\|_{L^\infty}^{1-\theta_{22}} X(t)^{\frac{\theta_{22}}{2}}Y(t),
\]
which has the form \eqref{eq:smallICbound} with 
\[\mu_6=1-\theta_{22}=\frac{2\alpha-2\rho}{2\rho+\alpha}\in(0,1]
\quad\Longleftrightarrow\quad
\frac\alpha4\leq\rho<\alpha.\]
Note that the assumptions \eqref{eq:rhocond1} and \eqref{eq:rhocond} are equivalent to $\rho\leq g(\delta)$, where $g$ is defined in \eqref{eq:g}.

Finally, for $J_1$, from \eqref{eq:J1} and Poincar\'e inequality, we have
\[
J_1\leq C\|\omega_0\|_{L^{\infty}} \|\Lambda_{h}^{\frac{\alpha}{2}}\omega\|_{L^2} \|\Lambda_{h}^{\frac{\alpha}{2}}\partial_{z}\omega\|_{L^2}\leq C\|\omega_0\|_{L^{\infty}} Y(t).
\]
Then \eqref{eq:smallICbound} holds with $\mu_7=1.$
For $J_2$, we improve the bound \eqref{eq:omegaomegaz} by \eqref{eq:omegaenhance3} (replacing $\delta$ by $1$), namely 
\[
  \|\Lambda_{h}^{1-\frac{\alpha}{2}}\omega\|_{L^p_x}\leq C \|\omega\|_{L^\infty_x}^{1-\theta_{32}}\|\Lambda_{h}^{\rho+\frac{\alpha}{2}}\omega\|_{L^2_x}^{\theta_{32}} 
  = C \|\omega\|_{L^\infty_x}^{\frac{2\alpha+2\rho-2}{2\rho+\alpha}}\|\Lambda_{h}^{\rho+\frac{\alpha}{2}}\omega\|_{L^2_x}^{\frac{2-\alpha}{2\rho+\alpha}},
 \]
 where $p$ and $\theta_{32}$ are redefined by
 \[
  p \triangleq \frac{2(2\rho+\alpha)}{2-\alpha}\in (2,\infty),\quad \text{so that} \quad
  \theta_{32}=\frac{2}{p} = \frac{1-\frac\alpha2}{\rho+\frac\alpha2}.
 \]
Consequently, the estimate \eqref{eq:J2} becomes
\begin{align*}
J_2 & \leq C \|\Lambda_{h}^{\delta}u\|_{L^2}^{\frac12} \|\omega_0\|_{L^\infty}^{\frac{1-\theta_{32}}{2}}\|\Lambda_{h}^{\rho+\frac{\alpha}{2}}\omega\|_{L^2}^{\frac{\theta_{32}}{2}} \|\partial_z\omega\|_{L^2}^{1-\theta_{33}} \|\Lambda_{h}^{\frac{\alpha}{2}}\partial_z\omega\|_{L^2}^{1+\theta_{33}}\\
& \leq \|\omega_0\|_{L^\infty}^{\frac{1-\theta_{32}}{2}} \|\Lambda_{h}^{\delta}u\|_{L^2}^{\frac12} \|\partial_z\omega\|_{L^2}^{1-\theta_{33}} Y(t)^{\frac{2+\theta_{32}+2\theta_{33}}{4}}.
\end{align*}
To obtain control by \eqref{eq:smallICbound}, we require
\begin{equation}\label{eq:h}
 \frac{2+\theta_{32}+2\theta_{33}}{4}\leq 1,\quad\Longleftrightarrow\quad
 \delta\geq\frac{\alpha(2-\alpha)}{2(2\rho+\alpha)}+\frac{-3\alpha^2+2\alpha+2}{2\alpha} \triangleq h(\rho).
\end{equation}
Note that when $\rho=0$, \eqref{eq:h} reduces to \eqref{eq:deltalow}. Under the assumption \eqref{eq:h}, we apply Poincar\'e inequality and get
\[
J_2\leq \|\omega_0\|_{L^\infty}^{\frac{1-\theta_{32}}{2}} \|\Lambda_{h}^{\delta}u\|_{L^2}^{-\frac{1-\theta_{32}}{2}+\theta_{33}} \|\partial_z\omega\|_{L^2}^{1-\theta_{33}} Y(t) \leq \|\omega_0\|_{L^\infty}^{\frac{1-\theta_{32}}{2}}X(t)^{\frac{1+\theta_{32}}{4}}Y(t),
\]
which has the form \eqref{eq:smallICbound} with
\[
\mu_8 = \frac{1-\theta_{32}}{2} = \frac{\alpha+\rho-1}{2\rho+\alpha}\in(0,1],\quad\forall\, \rho\geq0.
\]
We would like to point out that since there is no Poincar\'e inequality on $\|\partial_z\omega\|_{L^2}$, we necessarily require 
\[
 -\frac{1-\theta_{32}}{2}+\theta_{33}\geq0,\quad\Longleftrightarrow\quad
 \delta\leq \frac{\alpha(2-\alpha)}{2(2\rho+\alpha)}+\frac{-2\alpha^2+2\alpha+2}{2\alpha} = h(\rho)+\frac\alpha2.
\]

Now we collect all admissible conditions on $(\rho,\delta)$ as follows:
\[
\delta\leq f(\rho),\quad \delta\geq g^{-1}(\rho),\quad h(\rho)\leq \delta \leq h(\rho)+\frac\rho2,
\]
together with the mild assumptions
\[
\frac\alpha2\leq\delta<\alpha+\rho,\quad \frac\alpha4\leq\rho<\alpha.
\]
Figure \ref{fig2} illustrates the admissible region in the $(\rho, \delta)$-plane.
It is straightforward to verify that all the above conditions are satisfied when $(\rho, \delta)$ are chosen as in \eqref{eq:rhodelta}, for $\alpha \in (1, \alpha_0)$.
This completes the proof of Lemma \ref{lem:small}.

\begin{figure}[ht]
\includegraphics[width=.45\linewidth]{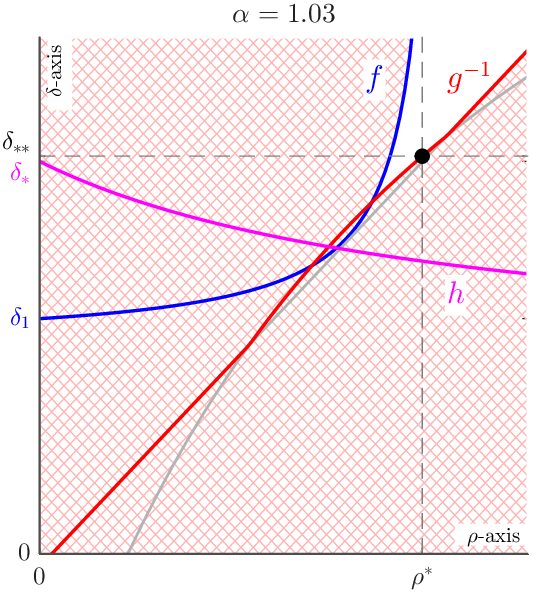}\quad	
\includegraphics[width=.45\linewidth]{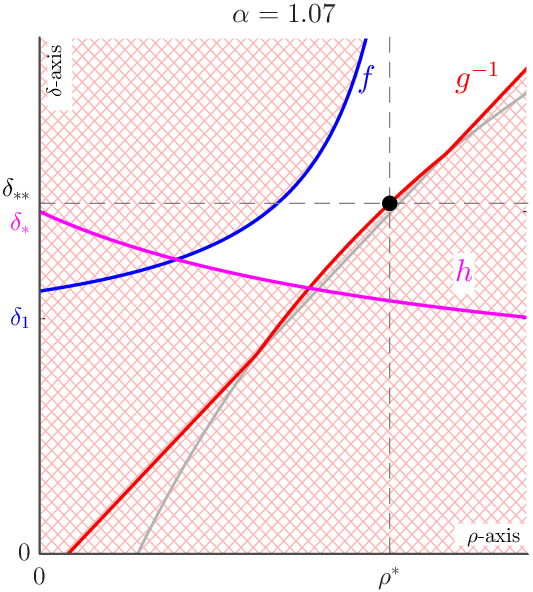}
\caption{An illustration on the admissible region, and the optimal choice of $(\rho,\delta)=(\rho^*, \delta_{**})$ .}\label{fig2}
\end{figure}

\subsection{Critical regime}
When $\alpha = 1$, from \eqref{eq:rhodelta} we have $\rho = \frac{1}{2}$ and $\delta = 1$. All estimates \eqref{eq:smallICbound} for $R_i$ from the subcritical regime carry over to the critical case, except for the term $R_3=U_{22}$, since $f(\frac12)$ is not defined.

Note that $\theta_{11} = \theta_{12} = \frac12$. From \eqref{eq:U22e}, we can readily verify that
\[
 U_{22} \leq C\|\omega_0\|_{L^\infty}^{\frac12}\|\Lambda_{h}\omega\|_{L^2}^{\frac12}\|\Lambda_{h}u\|_{L^{2}}^{\frac12} \|\Lambda_{h}^{\frac32}u\|_{L^{2}}^{\frac32}\leq C\|\omega_0\|_{L^\infty}^{\frac12}X(t)^{\frac14}Y(t).
\]
Hence, \eqref{eq:smallICbound} holds with $\mu_3 = \frac{1}{2}$.

The rest of the proof follows analogously to the subcritical case.
\vskip 3em

\bibliographystyle{plain}

\end{document}